\newcommand{\map}[1]{\textcolor{red}{#1}}
\newcommand{\acc}[1]{\textcolor{brown}{#1}}
\newcommand{\vx}{\mathbf{x}}
\newcommand{\vy}{\mathbf{y}}
\newcommand{\mW}{\mathbf{W}}
\newcommand{\mD}{\mathbf{D}}
\newcommand{\mM}{\mathbf{M}}
\newcommand{\mH}{\mathbf{H}}
\newcommand{\abs}[1]{\left\lvert#1\right\rvert}
\newcommand{\norm}[1]{\lVert#1\rVert}
\newcommand{\vzero}{\mathbf{0}}
\newcommand{\ve}{\mathbf{e}}
\newcommand{\mZ}{\mathbf{Z}}
\newcommand{\mB}{\mathbf{B}}
\newcommand{\mZero}{\mathbf{0}}
\newcommand{\mI}{\mathbf{I}}
\newcommand{\mA}{\mathbf{A}}
\newcommand{\mF}{\mathbf{F}}
\newcommand{\mJ}{\mathbf{J}}
\newcommand{\R}{\mathbb{R}}
\newcommand{\cX}{\mathcal{X}}
\newcommand{\vv}{\mathbf{v}}
\newcommand{\cN}{\mathcal{N}}
\newcommand{\cA}{\mathcal{A}}
\newcommand{\cP}{\mathcal{P}}
\newcommand{\cG}{\mathcal{G}}
\newcommand{\cS}{\mathcal{S}}
\newcommand{\cZ}{\mathcal{Z}}
\newcommand{\mK}{\mathbf{K}}
\newcommand{\vh}{\mathbf{h}}
\newcommand{\mS}{\mathbf{S}}
\newcommand{\mR}{\mathbf{R}}
\newcommand{\mL}{\mathbf{L}}
\newcommand{\vOnes}{\mathbf{1}}
\newtheorem{conj}{Conjecture}
\newtheorem{thm}{Theorem}
\newtheorem{lm}{Lemma}
\newtheorem{prop}{Proposition}
\newtheorem{cor}{Corollary}
\definecolor{pc_comment}{RGB}{30,125,125}
\DeclarePairedDelimiter{\paren}{(}{)}
\DeclarePairedDelimiter{\sqbracket}{[}{]}
\DeclarePairedDelimiter{\curlybrace}{\{}{\}}
\crefname{hypothesis}{Hypothesis}{Hypotheses}
\crefname{thm}{Theorem}{Theorems}
\title{Bifurcations in a Tunably Nonlinear Model of Opinion Dynamics\thanks{Submitted to the editors DATE.
\funding{FUNDING?}}}
\author{Heather Z. Brooks\thanks{Department of Mathematics, Harvey Mudd College, Claremont, CA, USA 
  (\email{hzinnbrooks@g.hmc.edu}).}
\and Philip S. Chodrow\thanks{Department of Computer Science, Middlebury College, Middlebury, VT, USA 
  (\email{pchodrow@middlebury.edu}).}
\and Mason A. Porter\thanks{Department of Mathematics, University of California, Los Angeles, Los Angeles, CA, USA; Santa Fe Institute, Santa Fe, NM, USA 
(\email{mason@math.ucla.edu})}}
\newcommand*{\addFileDependency}[1]{
  \typeout{(#1)}
  \@addtofilelist{#1}
  \IfFileExists{#1}{}{\typeout{No file #1.}}
}
\title{Emergence of Polarization in a Sigmoidal Bounded-Confidence Model of Opinion Dynamics}
\begin{document}
\maketitle

\begin{abstract}
    We study a nonlinear bounded-confidence model (BCM) of continuous-time opinion dynamics on networks with both 
    persuadable individuals and zealots.
    The model is parameterized by a scalar $\gamma$, which controls the steepness of a smooth influence function. 
    This influence function encodes the relative weights that nodes place on the opinions of other nodes.
    When $\gamma = 0$, this influence function recovers Taylor's averaging model; when $\gamma \rightarrow \infty$, the influence function converges to that of a modified Hegselmann--Krause (HK) BCM.
    Unlike the classical HK model, however, our sigmoidal bounded-confidence model (SBCM) is smooth for any finite $\gamma$.  
     We show that the set of steady states of our SBCM is qualitatively similar to that of the Taylor model when $\gamma$ is small and that the set of steady states approaches a subset
    of the set of steady states of a modified HK model as $\gamma \rightarrow \infty$.
    For several special graph topologies, we give analytical descriptions of important features of the space of steady states. 
    A notable result is a closed-form relationship between the stability of a polarized state and the graph topology in a simple model of echo chambers in social networks. 
    Because the influence function of our BCM is smooth, we are able to study it with linear stability analysis, which is difficult to employ with the usual discontinuous influence functions in BCMs.
\end{abstract}

\section{Introduction} \label{sec:intro}

Collective behavior plays a crucial role in shaping information flow, scientific 
progress, and political decision-making in human societies \citep{Bak-Colemane2025764118}. 
One major thread in the study of human collective behavior is modeling and analyzing
opinion dynamics \cite{noorazar2020classical}.
\emph{Opinion models} encompass simplified social interactions in which agents form and/or refine opinions about a topic through interactions with other agents.
Because humans interact with each other in networked settings, many opinion models situate agents on a graph (or on a more complicated network structure), with direct influence 
between agents who share an edge of the graph. 
These networked opinion models highlight the rich interplay between a system's dynamics and network structure \cite{porter2016}.
{For discussions of models of opinion dynamics from different perspectives, see \citet{bullo2019lectures}, \citet{golubitsky2023}, \citet{noorazar2020classical}, and \citet{proskurnikov2017tutorial}.}

One prominent model of opinion dynamics is the French--DeGroot (FD) model. 
The FD model has its roots in the work of \citet{french1956formal} on social power, and it was later generalized by \citet{degroot1974reaching} to the study of consensus in a collection of rational agents. In the FD model, each agent (i.e., node) $i$ has a scalar opinion $x_i(t)\in \R$ at discrete time $t$. We encode the set of node opinions as
an \emph{opinion vector} (i.e., \emph{opinion state}) $\vx(t) \in \R^n$, where $n$ is the number of agents. (In this paper, we use the terms ``opinion vector" and ``opinion state" interchangeably.)
The action of a row-stochastic matrix $\mB \in \R^{n\times n}$ yields synchronous opinion updates in discrete time steps:
\begin{align*}
    \vx(t+1) = \mB\vx(t)\;. 
\end{align*}
Typically, the matrix $\mB$ is a (possibly weighted) adjacency matrix, with an associated graph
$\cG$ (which we assume is undirected), such that $b_{ij} > 0$ 
only if $(i,j)$ is an edge of $\cG$. (The FD model permits edges of weight $0$.)
The graph $\cG$ encodes a social network, so we call it a \emph{social graph}.
The time-($t+1$) opinion $x_i(t+1)$ of node $i$ is a weighted average of 
the opinions of node $i$ and its neighbors at time $t$.

The Abelson model \citep{abelson1964mathematical, abelson1967mathematical} is a continuous-time variant of the FD model. It is given by the dynamical system
\begin{align*}
    \dot{\vx}(t) = \tilde{\mB}\vx(t)\;,
\end{align*}
where $\tilde{\mB}$ is a
matrix that depends on the underlying social graph $\cG$.
For example, with the choice $\tilde{\mB} = \mB - \mI$, the dynamics of the Abelson model are qualitatively similar to those of the corresponding FD model. 
In particular, the steady states of these two models are identical. 
When $\mB$ or $\tilde{\mB}$ are irreducible, as often is the case for matrices that one obtains from a connected graph $\cG$, these models converge to a \emph{consensus} steady state, which satisfies the property that $x_i = x_j$ for all agents $i$ and $j$. 
{In studies of opinion models, it is common to investigate
whether or not their dynamics converge to a steady state,} 
the speed of such convergence,
  and 
whether or not the set of steady states includes a consensus state~\citep{noorazar2020classical}.

Although the FD and Abelson models often converge
to consensus, it is rare to observe consensus in many real-world social systems \citep{levinDynamicsPoliticalPolarization2021}. 
This motivates the study of opinion models that exhibit enduring \emph{dissensus}, which encompasses polarization (i.e., two major opinion groups at steady state), fragmentation (i.e., three or more major opinion groups at steady state), and other situations. The emergence of dissensus is not an inherently negative outcome, as the social utility of an opinion state depends on its context \citep{landemore2015deliberation}. 
One approach to modeling persistent dissensus is to introduce agents whose opinions do not change with time; such agents are often called \emph{zealots} or ``stubborn agents". 
The Friedkin--Johnsen (FJ) model generalizes the FD model to include zealots \citep{friedkin1990social}, and the Taylor model \citep{taylor1968towards} analogously generalizes the Abelson model. 
In both extensions, the presence of at least two zealots with different opinions is sufficient to prevent global consensus at steady state.
The introduction of zealots leads to rich behavior in a variety of opinion models, including naming-game models \citep{verma2014impact}, voter models \citep{mobilia2007role, klamser2017zealotry}, and Galam models \citep{galam2007role, martins2013building}.  

Another way to obtain dissensus in opinion dynamics is by 
incorporating complexities into the rules that govern interactions between agents.
Hegselmann and Krause  \citep{hegselmann2002opinion, hegselmann2015opinion, hegselmann2019consensus} incorporated a nonlinearity into the FD
averaging model by introducing a ``confidence bound'' $\delta > 0$. 
In the Hegselmann--Krause (HK) model, which is a type of ``bounded-confidence model'' (BCM), the matrix $\mB$ is no longer fixed; it now depends on the opinion state $\vx$. 
In particular, $b_{ij} > 0$ 
only when agents $i$ and $j$ are adjacent (i.e., connected directly to each other in a network) and have opinions that satisfy $\abs{x_i - x_j} < \delta$. 
A convenient way to express this idea is by defining an \emph{influence function} $w:\R^2 \rightarrow \R$ 
with the formula
\begin{align}
    w(x_i, x_j) = \mathbbm{1}\left[\abs{x_i - x_j} < \delta\right]\;, \label{eq:HK-influence}
\end{align}
where $\mathbbm{1}$ is the indicator function. The influence function
$w(x_i, x_j) = 1$ if $\abs{x_i - x_j} < \delta$ and $w(x_i, x_j) = 0$ otherwise.
With this choice, we may write $b_{ij} = \hat{b}_{ij}w(x_i, x_j)$, where $\hat{b}_{ij}$ is the weight of the edge between nodes $i$ and $j$ when $\abs{x_i - x_j} < \delta$.
If we instead choose the constant influence function $w(x_i, x_j) = 1$, we obtain the Abelson model.

In the HK model, neighboring nodes with sufficiently different opinions do not interact with each other (or, at least, their opinions do not move closer to each other
as a result of an interaction). 
The HK model converges to a limiting opinion vector (i.e., a steady state)
in a finite number of time steps \citep{dittmer2001consensus}. The Deffuant--Weisbuch (DW) BCM has a similar rule for opinion updates, but only two adjacent agents update their opinions in each time step \citep{deffuant2000mixing}. 
Under suitable conditions, the DW model 
converges in expectation exponentially quickly in time to a steady state~\cite{chen2020convergence}. 
In both the HK and DW models, the structure of the steady state that occurs in
a single simulation depends nontrivially on the confidence bound $\delta$, the topology of the underlying social graph $\cG$, and the initial node opinions \citep{hegselmann2002opinion,meng2018opinion}. 
Because of the discontinuous dependence of interactions on the distances between node opinions, 
traditional\footnote{Linear stability analysis has been extended to piecewise-smooth dynamical systems \citep{bernardo2008piecewise}.}
linear stability analysis is typically
unhelpful. See \citet{ceragioli2021generalized} for {analyses of the asymptotic behavior of a wide class of BCMs and rigorous characterizations of the conditions for existence and uniqueness of their steady states.}

Some researchers have examined more complicated notions of stability, such as stability with respect to the introduction of new agents~\citep{blondel2009krause}, in BCMs. 
It is also possible to formulate variants of BCMs that incorporate zealots, with consequences for the structure of their steady states. 
For example, \Citet{fu2015opinion}
 observed that introducing zealots into the HK model affects both the number of steady-state opinion groups
   and the size of the largest steady-state opinion group.
 \Citet{brooks2020model} demonstrated numerically that the number of agents that agree with zealots 
  at steady state depends nonmonotonically on the number of zealots.

In this paper, we present a parameterized, nonlinear, continuous-time model of opinion dynamics that interpolates smoothly between the Abelson model and the HK
model. Our model is a \emph{sigmoidal bounded-confidence} model (SBCM). 
Using our SBCM, we {interpolate between}
an averaging model and a BCM as we vary its parameters. 
Our work complements several existing ``smoothed'' and otherwise continuous variants of the HK model.{\footnote{Researchers have also studied a variety of continuous variants of the DW model~\cite{deffuant2002,deffuantsmooth2004,gandica2023}.}
\Citet{yang2014opinion} considered a smooth influence function (also see \cite{olfati2003}) as a numerical approximation of the HK model, and they examined convergence properties of the resulting model.
\Citet{ceragioli2012continuous} carefully studied the role of the discontinuity in the HK model by considering sequences of ``smooth'' HK systems. 
Using this strategy, they proved that solutions of these smooth systems exist for all initial conditions, detailed qualitative similarities between the smoothed systems and the traditional HK model,
and showed that sequences of solutions of the smoothed systems converge pointwise to solutions of the classical HK model.  
However, they did not consider the role of zealots, nor did they study the effect of changing the influence function on the qualitative structure of the space of steady states.  

Several recent papers have analyzed networked opinion models
with a variety of nonlinear influence functions.
\citet{franci2019model} and \citet{bizyaeva2020nonlinear} formulated a flexible family of models and used tools from equivariant bifurcation theory to study consensus and dissensus behaviors that arise from particular system symmetries. \citet{bonetto2022nonlinear} recently investigated the effects of symmetries on consensus for systems with nonlinear Laplacian dynamics. \citet{devriendt2021nonlinear} studied the steady-state behavior of a gradient dynamical system whose dynamics are governed by an odd coupling function of the distance between node opinions.
In \cite{homs2020nonlinear}, these authors and Homs-Dones studied the steady states 
of this system using effective-resistance techniques. They derived closed-form expressions for steady
states on networks that are trees, cycles, or complete graphs.
\citet{neuhauser2020multibody} examined the effects of nonlinear interaction functions on consensus dynamics 
on networks with three-node interactions.
The model that we study is
related to the nonlinear gradient systems that have arisen in other applications, including synchronization of coupled oscillators \cite{arenas2008synchronization,rodrigues2016kuramoto}, collective behavior in animals \cite{couzin2011uninformed, nabet2009dynamics}, and aggregation dynamics \cite{bertozzi2009blow}.

Our article proceed as follows. 
In \Cref{sec:modeldef}, we define our SBCM on networks with zealots. 
Our SBCM's influence function includes a tunable parameter $\gamma \in \mathbb{R}_{\geq 0}$ with extreme values $\gamma = 0$ and $\gamma \rightarrow \infty$ that correspond to the influence functions of the Abelson model and a modified HK model, respectively. 
In \Cref{sec:basicresults}, we discuss the structure of the linearization of our SBCM. 
In \Cref{sec:limitingprops}, we study our SBCM's steady states in the limiting cases $\gamma = 0$ and $\gamma \rightarrow \infty$. We are concerned especially with the latter case, and we describe circumstances in which the steady states of our SBCM in this regime
resemble steady states of the HK model. 
In \Cref{sec:specialcases}, we examine the qualitative behavior of the space of steady states as one varies $\gamma$. 
We do not give general results in this situation, but we are able to make progress for some special graph structures.
We consider two special graph structures and give analytical descriptions of some properties of our SBCM's steady states 
for these cases. 
A notable result is an analytical description of the relationship between network topology and the linear stability of polarized opinion states in a simple scenario that is motivated by echo chambers in social networks. We conclude in \Cref{sec:conclusions} with a discussion and suggestions for future work. In \Cref{software}, we indicate what software we employed and give a website with a code repository to reproduce our numerical experiments.

\section{Our sigmoidal bounded-confidence model (SBCM)} \label{sec:modeldef}

    Let $\mA \in \{0,1\}^{n \times n}$ be the adjacency matrix of an undirected, unweighted graph $\cG$ with a set $\cN$ of $n$ nodes. 
    Nodes $i$ and $j$ are adjacent if $a_{ij} = 1$; when $i$ and $j$ are not adjacent, $a_{ij} = 0$.
    We use the notation $i \sim j$ to indicate that nodes $i$ and $j$ are adjacent. 
    We assume that $\cG$ has no self-edges, so
        $a_{ii} = 0$ for all $i \in \cN$.
    
    An \emph{opinion state} (or, equivalently, an \emph{opinion vector}) of our SBCM is a vector $\vx \in \R^{n}$. 
    The entry $x_i$ is the \emph{opinion} of node $i$. 
    We write $\vx = \vx(t)$ when we wish to emphasize the dependence of $\vx$ on time $t$. 
    We assume that some subset $\cZ \subset \cN$ of nodes are \emph{zealots}; the opinions of these nodes are not influenced by other nodes.
        By contrast, the opinions of \emph{persuadable} nodes $\cP = \cN \setminus \cZ$ 
        can change when they interact with other nodes.
    
    We define opinion dynamics on $\mA$ via an update operator $\mF$ with components $\{f_i(\vx)\}$ that govern the time evolution of the system: 
    \begin{align}
        \frac{d x_i}{d t} = f_i(\vx) \triangleq 
        \begin{cases}
            \frac{\sum_j w(x_i, x_j)(x_j - x_i)}{\sum_j w(x_i, x_j)}\,, &\quad i \in \cP \\
            0\,, &\quad i \in \cZ\;, \label{eq:dynamics}
        \end{cases}
    \end{align}
    where the \emph{influence function}
    $w : \R^2 \rightarrow \R$ encodes
        the susceptibility of nodes to each other's opinions as a function of their current opinions. 
    
    In our SBCM, we consider the parameterized influence function
    \begin{align}
    \label{eq:w_sigmoid}
        w(x_i, x_j) \triangleq 
        \begin{cases}
            \frac{1}{1 + e^{\gamma(x_i - x_j)^2 - \gamma \delta }}\,, &\quad i \sim j \\ 
            0\,, &\quad \text{otherwise}\,,
        \end{cases}
    \end{align} 
    where $\gamma, \delta \in \R_{\geq 0}$.
            This function is a translated and reflected logistic sigmoid with respect to the square of the distance between node opinions.\footnote{This sigmoidally smoothed influence
            function is similar to the one that was used by \citet{okawa2022predicting} [see equation (8) of their paper] in their construction of a neural network that is informed by models of opinion dynamics. However, Okawa and Iwata did not further examine the properties of their \acc{model}.} 
          Heuristically, for adjacent nodes $i$ and $j$, the influence
          function $w(x_i, x_j)$ is large when $x_i$ and $x_j$ are close in opinion space (i.e., when nodes $i$ and $j$ have similar opinions). 
    One can interpret $w(x_i, x_j)$ as a weighting function that encodes the relative receptivity of nodes $i$ and $j$ to
   each other's opinions.\footnote{The evolution of weights as a function of opinions as a feedback mechanism is
    reminiscent of the evolution of self-weights in the DeGroot--Friedkin model \cite{jia2015opinion}.}  
    We collect these values in a matrix $\mW(\vx) \in \R^{n \times n}$. 
    When an opinion state $\vx$ is clearly implied, we abbreviate the components of this matrix as $w_{ij} = w(x_i, x_j)$ and we abbreviate the matrix itself as $\mW = \mW(\vx)$. 
    Additionally, $w(x_i,x_j)$ depends on $x_i$ and $x_j$ only through their absolute difference $\abs{x_i - x_j}$. 
    Therefore, we can define a function $\omega: \R \rightarrow \R$ 
    through the relation $w(x_i, x_j) = \omega(\abs{x_i - x_j})$.

    \begin{figure}
        \centering
        \includegraphics[width=\textwidth]{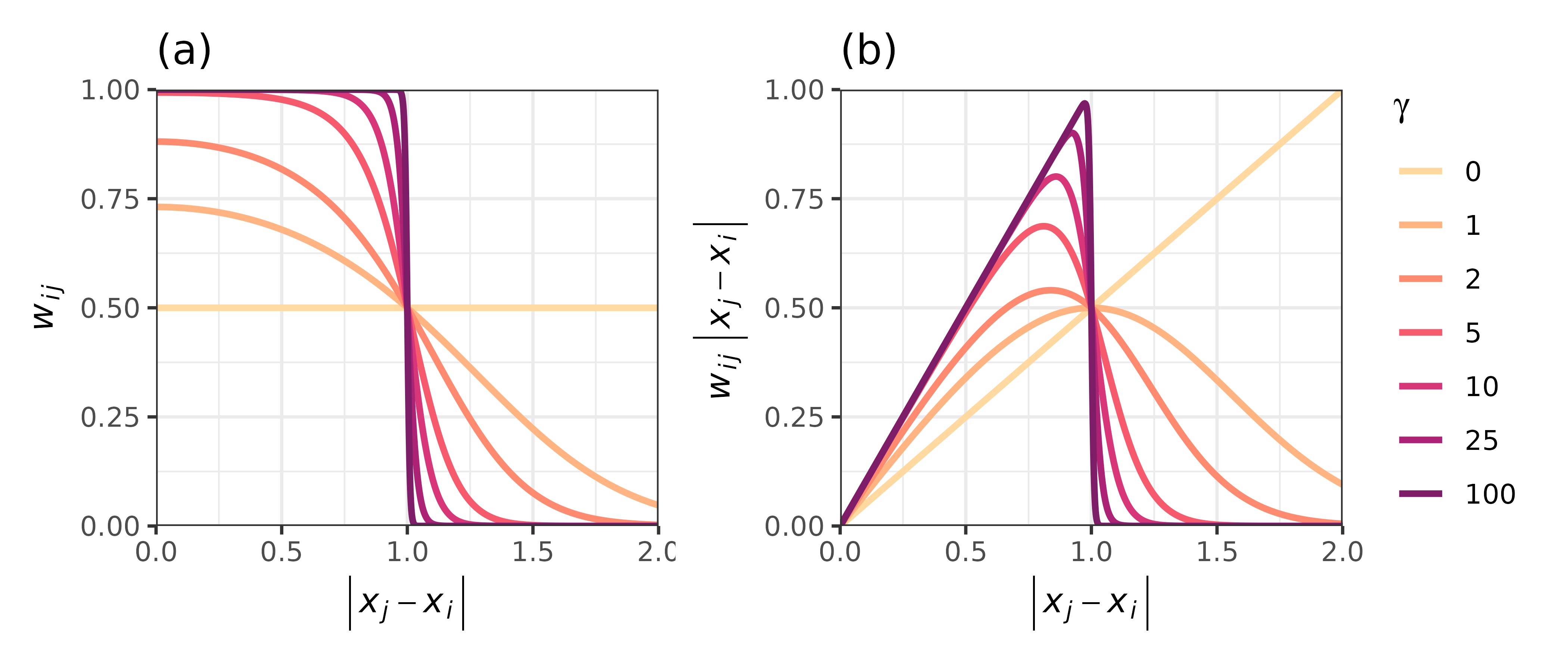}
        \caption{
            (a) Examples of the 
                        influence function $w(x_i, x_j)$ with the confidence bound $\delta = 1$ for different values of the parameter $\gamma$. 
            When $\gamma=0$, all interaction weights are equal and independent of the distances between opinions. 
                        As $\gamma$ increases, there is an increase in the interaction strength between nodes
                        $i$ and $j$ whose opinions satisfy $\vert x_j - x_i \vert^2 < \delta$. 
            Conversely, as $\gamma$ increases, the interaction strength decreases between nodes
            whose opinions satisfy $\vert x_j - x_i \vert^2 > \delta$.
                                (b) Examples of the influence 
                                on the opinion of a persuadable node as one increases $\gamma$. This influence of node $j$ on node $i$ depends on the product of the 
                        influence function $w(x_i,x_j)$ in the left panel and the distance $\vert x_j - x_i \vert$ in opinion space. 
            When $\gamma = 0$, the influence is monotonic with respect to the the opinion distance $\vert x_j - x_i \vert$; when $\gamma > 0$, the influence has a local maximum. 
        } \label{fig:sigmoid}
    \end{figure}

    The parameter $\gamma$ controls the ``sharpness'' of the dependence of $w_{ij}$ on the squared distance $(x_i - x_j)^2$ (see \Cref{fig:sigmoid}). 
    Two limiting cases are of particular interest. 
    When $\gamma = 0$, the function $w(x_i, x_j)$ is a constant and is thus independent of $x_i$ and $x_j$. 
    By contrast, as $\gamma \rightarrow \infty$, the function $w_{ij}$ converges pointwise to a step function (with the step located at $\delta$) of the squared distance: 
    \begin{align}
    \label{eq:winfty}
        w^{(\infty)}(x_i, x_j) = \lim_{\gamma \rightarrow \infty} w(x_i, x_j) = 
        \begin{cases}
            1\,, &\quad (x_i - x_j)^2 < \delta \\ 
            \frac{1}{2}\,, &\quad (x_i - x_j)^2 = \delta \\ 
            0\,, &\quad (x_i - x_j)^2 > \delta\,. 
        \end{cases}
    \end{align} 
    These two limits correspond to well-known opinion models. 

    When $\gamma = 0$, equation \eqref{eq:dynamics} reduces to
    \begin{equation}
    \label{eq:Taylor}
        \frac{d x_i}{d t} = \frac{1}{d_i}\sum_{j\sim i}(x_j - x_i)\;,
    \end{equation}
    where $d_i$ is the degree (i.e., the number of neighbors) of node $i$.
    This is a simple version of the Abelson model \cite{abelson1964mathematical, abelson1967mathematical}.  
    In the absence of zealots, the only steady
        states of \eqref{eq:Taylor} on connected graphs are consensus states, in which all nodes hold the same opinion at stationarity and $\vx = x\vOnes$ \cite{ren2005consensus}.\footnote{
        {One can write the Abelson model as $\dot{\vx} = \mD^{-1}\mL\vx$, where $\mD$ is the diagonal matrix of node degrees and $\mL = \mD - \mA$ is the combinatorial graph Laplacian. 
        Therefore, our model \eqref{eq:dynamics} extends one type of linear Laplacian dynamics. 
        For several extensions of models of the form $\dot{\vx} = \mL\vx$, see the generalized Laplacian-flow models of \citet{bonetto2022nonlinear} and \citet{srivastava2011bifurcations}.}}
    In several of our arguments, it is
          useful to explicitly track the dependence of the update operator $\mF$ on $\gamma$. We do this by
          using the notation $\mF_\gamma$.

    As long as $\cZ$ is not empty (i.e., when the graph $\cG$ has at least one zealot), equation \eqref{eq:Taylor} is an extension of the Abelson model {that has been attributed to Taylor \cite{taylor1968towards}}. If 
    $\cG$ is connected, equation \eqref{eq:Taylor} has
        a unique steady
        state \cite{proskurnikov2017tutorial}.
         This steady state is \emph{harmonic} at all persuadable vertices in the sense that the opinion of each persuadable node is equal to the mean of the
    opinions of its neighbors \cite{karlsson2003some}. 
    We refer to this steady state, which we denote by $\bar{\vx}$,
         as the \emph{harmonic 
    state} because it is the discrete harmonic extension of zealot opinions to the rest of $\cG$
        \cite{benjamini2003harmonic}. 
    The harmonic state depends only on the zealot opinions \cite{proskurnikov2017tutorial}, and equation \cref{eq:Taylor}
    reaches this state regardless of the initial opinions 
        of the persuadable nodes. 
        The harmonic state coincides with the steady state of FD
        dynamics on a connected graph with zealots~\cite{parsegov2016novel, proskurnikov2017tutorial}. 
    
    When $\gamma \rightarrow \infty$, the update operator
    $\mF$ converges pointwise to the update operator
    of a continuous-time BCM with synchronous updating. This limiting model, with an update
    operator that we denote by $\mF_\infty$, is in the spirit of the well-known HK
        model \cite{hegselmann2002opinion, hegselmann2015opinion, hegselmann2019consensus}. 
    Notably, there is a qualitative difference between the steady states of our
    SBCM as $\gamma \rightarrow \infty$ and those of the classical HK model. 
    The limiting influence function \eqref{eq:winfty} differs from the influence function in the HK model when $(x_i-x_j)^2 = \delta$. 
    The HK model already possesses multiple steady states,
         and this modification introduces additional ones. 
    As we will show in \Cref{sec:limitingprops}, as $\gamma \rightarrow \infty$, the set of linearly stable steady states of our SBCM converges to 
        a subset 
    of the steady states of the standard HK model. 
        If a sequence of steady states of our SBCM converges to one of
        the \emph{additional} steady states of the modified HK model that we obtain by using
        the modified influence function \eqref{eq:winfty}, then these
        steady states are
        linearly unstable for sufficiently large $\gamma$.

    \begin{figure}
        \centering
        \includegraphics[width=\textwidth]{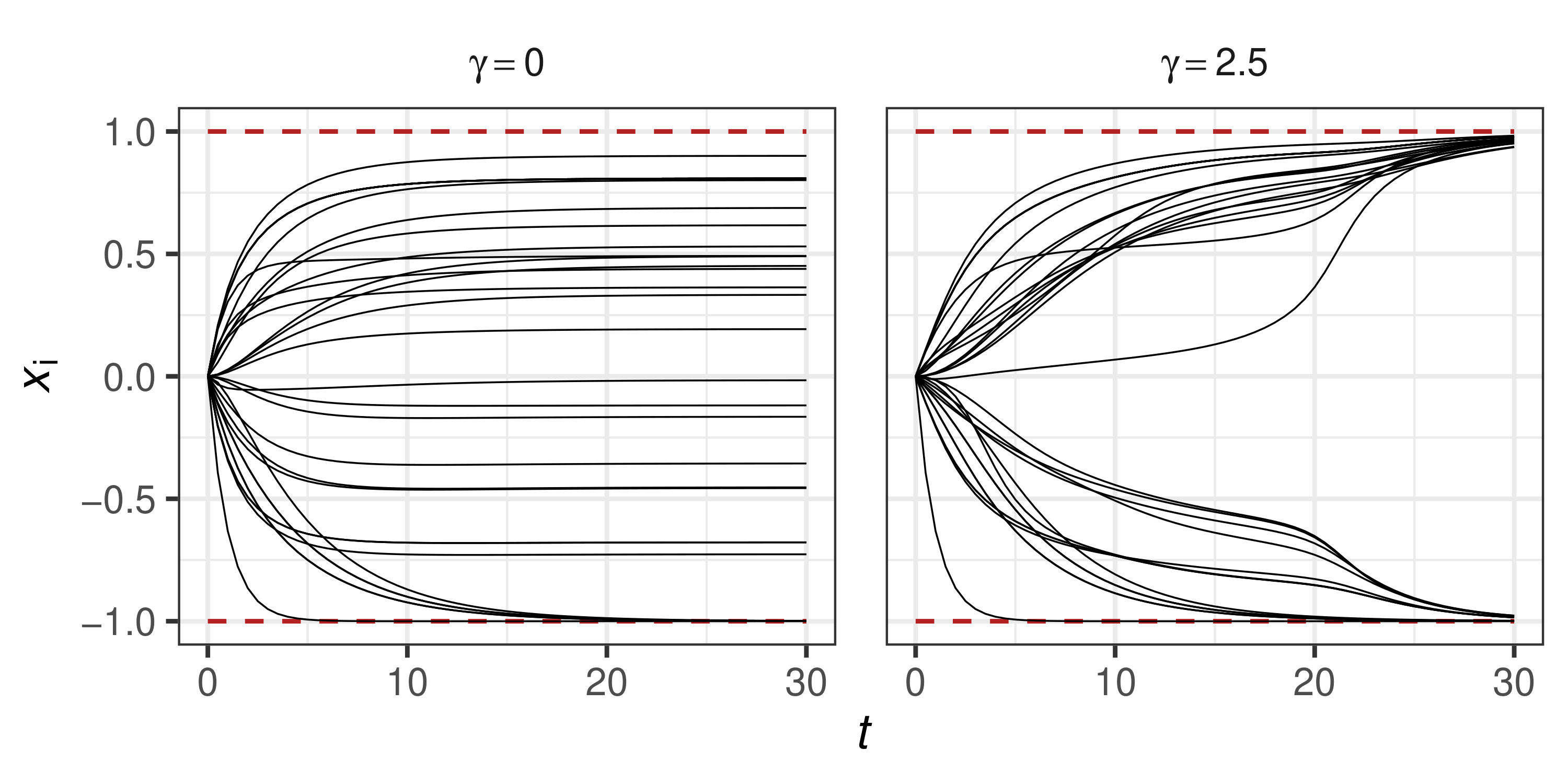}
        \caption{Time evolution of node opinions for two different values of $\gamma$ in the Zachary Karate Club graph \cite{zachary1977information}.
        We initialize the opinion of each persuadable node $i$ to be $x_i = 0$. 
        When $\gamma = 0$, there is a unique steady state in which the nodes are scattered widely
        in opinion space.      
        As $\gamma$ increases, sharply polarized steady states emerge.  
        The solid curves show the opinion trajectories of persuadable nodes, and the dashed lines show the opinion trajectories of zealots.}
        \label{fig:dynamics-examples}
    \end{figure}

    In \Cref{fig:dynamics-examples}, we give two examples of the time evolution of our SBCM on a small graph. 
    When $\gamma = 0$, the unique steady state is the harmonic state. 
 In the harmonic state, the node opinions are
        scattered 
        widely 
        between the two zealots. 
        As we increase $\gamma$, we also obtain other steady states, including sharply polarized ones.

\section{Steady states and linear stability analysis: Basic results} \label{sec:basicresults}

    We now examine the steady states of 
        equation \eqref{eq:dynamics}, which (for convenience) we also describe as ``steady states of $\mF$".
    We present basic results about these states and their linear stability.

    We explore the structure and stability of steady states of our SBCM by examining the linearization of equation \eqref{eq:dynamics} via the Jacobian matrix $\mJ$ of $\mF$. We evaluate $\mJ$ at a steady state $\vx$. We begin by separating $\mJ$
        into blocks with different combinations of persuadable nodes $\cP$ and zealots $\cZ$. 
    We write
    \begin{align}
        \mJ = \sqbracket*{\begin{matrix}
            \mJ_{\cP} & \mJ_{\cP\cZ} \\   
            \mJ_{\cZ\cP} & \mJ_{\cZ}
        \end{matrix}} = 
         \sqbracket*{\begin{matrix}
            \mJ_{\cP} &             \mJ_{\cP\cZ}\\   
            \mZero & \mZero 
        \end{matrix}}\;.
    \end{align}
    The entries of the block $\mJ_\cP$ are derivatives of the form $\partial f_i(\vx)/\partial x_j$ with $i, j \in \cP$, the entries of the block $\mJ_{\cP\cZ}$ are derivatives of the form $\partial f_i(\vx)/\partial x_j$ (with $i \in \cZ$ and $j \in \cP$, and the entries of the other blocks are analogous.
    The two lower blocks are identically $0$ because the dynamics of equation \eqref{eq:dynamics}
    does not change the opinions of the zealots. 
    The spectrum of $\mJ$ thus consists of the spectrum of $\mJ_{\cP}$ and a set of $0$ eigenvalues that correspond to prohibited modifications of zealot opinions. 
    
    Given the above structure of $\mJ$, we focus our attention on $\mJ_{\cP}$, which we evaluate at a steady state $\vx$.  
      Fix $i,j \in \cP$. 
    If $i \not \sim j$ and $i \neq j$, we know that $\frac{\partial f_i(\vx)}{\partial x_j} = 0$. 
    Therefore, assume that $i \sim j$ and $i \neq j$. For convenience, we define the strength (i.e., weighted degree) of node $i$ to be $s_i \triangleq \sum_j w_{ij}$.
        
    Entry $(i,j)$ of $\mJ_\cP$ is
    \begin{align*}
        \frac{\partial f_i(\vx)}{\partial x_j} = \sqbracket*{\frac{\partial}{\partial x_j}\paren*{\frac{1}{s_i}}}\underbrace{\sum_{k\sim i}{w_{ik}(x_k - x_i)}}_{= s_i\mF(\vx)_i} + \frac{1}{s_i} \sum_{k \sim i} \paren*{\frac{\partial w_{ik}}{\partial x_j}(x_k - x_i) +  w_{ik} \frac{\partial x_k}{\partial x_j} }\,.
    \end{align*}  
    Because $\vx$ is a steady state of equation \eqref{eq:dynamics} by hypothesis, the first term vanishes. Therefore,
    \begin{align}
    \label{eq:J_offdiag}
        \frac{\partial f_i(\vx)}{\partial x_j} = \frac{1}{s_i}  \paren*{\frac{\partial w_{ij}}{\partial x_j}(x_j - x_i) +  w_{ij}}\,.    
    \end{align}
    This gives an expression for the off-diagonal entries of $\mJ_\cP$.

    The diagonal entries are
    \begin{align}
    \label{eq:J_diag}
        \frac{\partial f_i(\vx)}{\partial x_i} &= \frac{1}{s_i}\sum_{k \sim i}\paren*{\frac{\partial w_{ik}}{\partial x_i}(x_k - x_i) - w_{ik}} \\ 
        &= \frac{1}{s_i} \sqbracket*{ \sum_{\substack{k \sim i \\ k \notin \mathcal{Z}}}\paren*{\frac{\partial w_{ik}}{\partial x_i}(x_k - x_i) - w_{ik}} +  \sum_{\substack{k \sim i \\ k \in \mathcal{Z}}}\paren*{\frac{\partial w_{ik}}{\partial x_i}(x_k - x_i) - w_{ik}}} \,. \nonumber
    \end{align}
    
    Thus far, we have refrained from using the functional form of $w$ in
        equation \eqref{eq:w_sigmoid}. Indeed, in future work, it may be desirable to consider choices of $w$ other than ours.
                        Incorporating assumptions about the structure of $w$ allows us to make further simplifications.
    If we suppose that  $w$ is a function of $x_i$ and $x_j$ only through $x_i - x_j$ (as is the case in equation \eqref{eq:w_sigmoid}), 
    it follows that $\frac{\partial w_{ik}}{\partial x_i} = -\frac{\partial w_{ik}}{\partial x_k}$. 
            We then 
        express the diagonal terms via the off-diagonal terms by writing
    \begin{align*}
        \frac{\partial f_i(\vx)}{\partial x_i} = - \sum_{\substack{j\sim i \\ j \notin \mathcal{Z}}}\frac{\partial f_i(\vx)}{\partial x_j} - \sum_{\substack{j\sim i \\ j \in \mathcal{Z}}}\frac{\partial f_i(\vx)}{\partial x_j}\,.
    \end{align*}

    To make further progress, we explicitly calculate the derivative 
    \begin{align*}
        \frac{\partial w_{ij}}{\partial x_j} = -2\gamma w_{ij}(1 - w_{ij})(x_j - x_i)
    \end{align*}
    to obtain 
    \begin{align*}
        \frac{\partial f_i(\vx)}{\partial x_j} = \frac{w_{ij}}{s_i}\sqbracket*{1 - 2\gamma (1-w_{ij})(x_j - x_i)^2 }\,.
    \end{align*}
    We now define several matrices, which we implicitly evaluate at the steady state $\vx$. 
    Let $\mR_{\cP}(\gamma)$ be the matrix with entries $r_{ij} = s_i \frac{\partial f_i}{\partial x_j}$, and 
    let $\mL_\cP(\gamma)$ be the combinatorial Laplacian matrix of $\mR_{\cP}$. Entry $(i,j)$ of $\mL_\cP(\gamma)$
    is $\mathbbm{1}[i = j]\sum_{k \sim i} r_{ik} - r_{ij}$, where the indicator function $\mathbbm{1}[i = j]$ is 1 if $i = j$ and is 0 otherwise. 
    We collect the strengths $s_i$ into a diagonal matrix $\mS_{\cP}$, and we let $\mZ_{\cP}$ be the diagonal matrix with entries $z_{ii} = s_i\sum_{j \in  \cZ(i)} \frac{\partial f_i}{\partial x_j}$. 
    We then write the Jacobian matrix for the subgraph of persuadable nodes (i.e., the so-called ``persuadable subgraph'') as 
    \begin{align} \label{eq:matrix-jacobian}
        \mJ_\cP \triangleq \frac{\partial \mF}{\partial \vx} = - \mS_{\cP}^{-1}\sqbracket*{\mZ_\cP + \mL_\cP}\;. 
    \end{align}
    Because $\mS_{\cP}^{-1}$ is symmetric and positive definite, its square root is well-defined and $\mJ_\cP$ is similar to the symmetric matrix $\mS_{\cP}^{-1/2} \mM_\cP \mS_{\cP}^{-1/2}$, where  $\mM_\cP\triangleq -\mZ_\cP - \mL_\cP$. 
    From this, we infer two important facts. 
    First, the eigenvalues of $\mJ_\cP$ are real. 
    Second, $\mJ_\cP$ is negative definite (respectively, negative semidefinite) if and only if $\mM_\cP$ is negative definite (respectively, negative semidefinite).

    Because the entries of $\mR_{\cP}$ are not guaranteed to be nonnegative, $\mL_\cP$ is not necessarily positive definite. 
    However, one can write $\mL_\cP$ as the difference between two positive-semidefinite matrices:
        \begin{align*}
        \mL_{\cP} = \mL_{\cP}^{(1)} - 2\gamma \mL_{\cP}^{(2)}\;,
    \end{align*}
    where $\mL_{\cP}^{(1)}$ is the combinatorial Laplacian of the matrix with entries $w_{ij}$ and $\mL_{\cP}^{(2)}$ is the combinatorial Laplacian of the matrix with entries $w_{ij}(1-w_{ij})(x_j - x_i)^2$. 
   
    The following proposition gives a sufficient condition for the existence of a positive eigenvalue of $\mJ$.

    \begin{prop} \label{prop:instability-condition}
        At a steady state $\vx$, suppose that there exists a node $i$ such that 
        \begin{align} 
            (1 - w_{ij})(x_j - x_i)^2 > \frac{1}{2\gamma} \quad \text{for all} \quad j \sim i\;. \label{eq:instability-condition}
        \end{align}
        It then follows that $\mJ_\cP$ (and thus the Jacobian matrix $\mJ$) at $x$ has at least one strictly positive eigenvalue. 
    \end{prop}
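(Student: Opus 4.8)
The plan is to lean on the structural facts about $\mJ_\cP$ established immediately before the statement. There it is shown that $\mJ_\cP$ has real eigenvalues and that $\mJ_\cP$ is negative semidefinite if and only if $\mM_\cP = -\mZ_\cP - \mL_\cP$ is negative semidefinite. Since the spectrum of the full Jacobian $\mJ$ is the spectrum of $\mJ_\cP$ together with a block of zeros, it suffices to show that the symmetric matrix $\mM_\cP$ is \emph{not} negative semidefinite, i.e., that $\mM_\cP$ has a strictly positive eigenvalue. For a symmetric matrix this is implied by the existence of a strictly positive diagonal entry, since $\lambda_{\max}(\mM_\cP) \geq \ve_i^\top \mM_\cP \ve_i = (\mM_\cP)_{ii}$. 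Thus the whole argument reduces to showing $(\mM_\cP)_{ii} > 0$ for the node $i$ in the hypothesis (which must be persuadable for this diagonal entry to be defined).

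The second step is to compute $(\mM_\cP)_{ii}$ by carefully combining the definitions of $\mZ_\cP$ and $\mL_\cP$. Writing $r_{ij} = s_i\,\partial f_i/\partial x_j$, the diagonal of $\mL_\cP$ contributes $\sum_{j \sim i,\, j \in \cP} r_{ij}$ and the diagonal of $\mZ_\cP$ contributes $\sum_{j \sim i,\, j \in \cZ} r_{ij}$, so these terms together account for all neighbors of $i$ with the correct sign and
\begin{align*}
    (\mM_\cP)_{ii} = -(\mZ_\cP)_{ii} - (\mL_\cP)_{ii} = -\sum_{j \sim i} r_{ij}\,.
\end{align*}
Using the derivative computed just above the statement, one has $r_{ij} = w_{ij}\bigl[1 - 2\gamma(1 - w_{ij})(x_j - x_i)^2\bigr]$ for \emph{every} neighbor $j$ of $i$ (persuadable or zealot; the formula for $\partial f_i/\partial x_j$ in~\eqref{eq:J_offdiag} did not use whether $j$ is persuadable), so
\begin{align*}
    (\mM_\cP)_{ii} = \sum_{j \sim i} w_{ij}\bigl[2\gamma(1 - w_{ij})(x_j - x_i)^2 - 1\bigr]\,.
\end{align*}

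The third step is to invoke the hypothesis. For the influence function~\eqref{eq:w_sigmoid}, $w_{ij} \in (0,1)$ whenever $i \sim j$, so $w_{ij} > 0$ and $1 - w_{ij} > 0$; moreover a persuadable node $i$ has at least one neighbor (otherwise $s_i = 0$ and $f_i$ is undefined). Condition~\eqref{eq:instability-condition} states precisely that $2\gamma(1 - w_{ij})(x_j - x_i)^2 - 1 > 0$ for every $j \sim i$, so the displayed sum is a sum of strictly positive terms over a nonempty index set, whence $(\mM_\cP)_{ii} > 0$. Combined with the reduction in the first paragraph, this yields a strictly positive eigenvalue of $\mJ_\cP$, and hence of $\mJ$.

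I do not anticipate a substantive obstacle; the only delicate point is the index bookkeeping in the second step — checking that the contributions of $\mZ_\cP$ and of the diagonal of $\mL_\cP$ exactly exhaust the neighborhood of $i$ with matching signs, and noting that the off-diagonal entries of $\mL_\cP$ are irrelevant because we test only against $\ve_i$. A minor formal point to state explicitly is that $i$ is taken to be a persuadable node, so that $\ve_i$ lives in the coordinate subspace on which $\mJ_\cP$ acts.
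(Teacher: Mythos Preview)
Your proposal is correct and follows essentially the same route as the paper: reduce to $\mM_\cP$, use the Rayleigh quotient with $\ve_i$ to bound $\lambda_{\max}(\mM_\cP)$ below by the $i$th diagonal entry, and then observe that the hypothesis makes each summand in $(\mM_\cP)_{ii} = -\sum_{j\sim i} w_{ij}\bigl[1 - 2\gamma(1-w_{ij})(x_j-x_i)^2\bigr]$ strictly positive. Your bookkeeping in the second step (splitting the sum over persuadable and zealot neighbors via $\mL_\cP$ and $\mZ_\cP$) is in fact more explicit than the paper's, which simply writes down the formula for $m_{ii}$ directly.
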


    \begin{proof}
        Because $\mJ_\cP$ and $\mM_\cP$ are similar matrices, it suffices to show that $\mM_\cP$ has a strictly positive eigenvalue.  
        Let $\lambda$ be the largest eigenvalue of $\mM_\cP$. 
        Because $\mM_\cP$ is symmetric, the Rayleigh--Ritz Theorem gives the lower bound $\lambda \geq \vv^T\mM_\cP\vv$ for any unit vector $\vv$. 
        Choose $\vv = \ve_i$. 
        Because $\ve_i^T\mM_\cP\ve_i = m_{ii}$, it suffices to check whether or not $\mM_\cP$ has any positive diagonal entries. 
        The diagonal entries $m_{ii}$ are
        \begin{align*}
            m_{ii} &= - \sum_{j \sim i}w_{ij}\sqbracket*{1 - 2\gamma(1-w_{ij})(x_j - x_i)^2}\;.
        \end{align*}
        If this sum is strictly positive, then we have proven the existence of a positive eigenvalue of $\mJ$. 
        A heavy-handed sufficient condition for this is that each individual term of the sum is positive. 
                This condition is exactly 
        \begin{align*}
            (1-w_{ij})(x_j - x_i)^2 > \frac{1}{2\gamma} \quad \text{for all} \quad j \sim i\,.
        \end{align*}
                        This proves the claim. 
    \end{proof}

    Recall that a steady state $\vx$ is linearly unstable if $\mJ$ at the steady state $\vx$ has at least one positive eigenvalue. 
    If $\vx$ is a linearly stable steady state, it then follows that each node $i$ has a neighbor $j$ such that $(1-w_{ij})(x_j-x_i)^2 < \frac{1}{2\gamma}$.

\section{Limiting behavior} \label{sec:limitingprops}

    In \Cref{sec:modeldef}, we discussed the relationship between the update 
     operator of our SBCM and those of other well-studied opinion models. 
         When $\gamma = 0$, our
    SBCM reduces to the Taylor model, which encodes continuous-time averaging of the opinions of the nodes of a network. 
    In the limit $\gamma \rightarrow \infty$, the SBCM update operator
    converges pointwise to that of an HK model.
        One can
        imagine, for sufficiently small $\gamma$, that 
        our SBCM's long-term dynamics resemble those of a continuous-time averaging model and, for sufficiently large $\gamma$, that its
        long-term dynamics resemble those of a continuous-time HK model. 
    In this section, we give several precise statements that support this intuition.

    \subsection{Small $\gamma$}

    As we discussed in \Cref{sec:modeldef}, when $\gamma = 0$, our SBCM is identical to the Taylor continuous-time averaging model \cite{taylor1968towards}. 
        In particular, there is a unique steady state, which is the harmonic state $\bar{\vx}$. The Implicit Function Theorem implies that
        small perturbations of $\gamma$ 
        from $0$ result in a qualitatively similar system in the sense that the perturbed system possesses a unique steady state that depends continuously on $\gamma$.

    \begin{thm}\label{thm:small-gamma}
        Let $\cG$ be a graph with persuadable nodes $1, \ldots, n$ and zealots $n + 1, \ldots, n + m$. 
        Suppose that $\cG$ has at least one zealot and that the persuadable subgraph $\cG_\cP$ is connected.
        Let $\bar{\vx}$ be the unique solution of 
        our SBCM with $\gamma = 0$ on \acc{$\cG$}.
        It follows that there exists $\epsilon > 0$ and a unique $C^{1}$ function $\vh: [0,\epsilon) \rightarrow \R^{n}$ such that the following statements hold:
        \begin{enumerate}
            \item The function $\vh$ satisfies $\vh(0) = \bar{\vx}$\,.
            \item For all $\gamma \in [0,\epsilon)$, equation \eqref{eq:dynamics} has
            a unique $\gamma$-dependent steady state $\vx_\gamma \in \R^{n}$ and the relation $\vh(\gamma) = \vx_\gamma$ holds. 
                 \item For all $\gamma \in [0, \epsilon)$, the steady state $\vx_\gamma$ is linearly stable. 
                    \end{enumerate}
    \end{thm}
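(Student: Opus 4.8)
The plan is to prove all three items at once via the Implicit Function Theorem (IFT), applied to the steady-state equation viewed as a function of the persuadable opinions, and then to upgrade the \emph{local} uniqueness that the IFT supplies to the \emph{global} uniqueness claimed in item~2 by confining all steady states, uniformly in $\gamma$, to one fixed compact set. Concretely, let $G:\R^{n}\times\R\to\R^{n}$ send $(\vx,\gamma)$ to $(f_i(\vx))_{i\in\cP}$, with the zealot coordinates of the argument of $\mF_\gamma$ frozen at their prescribed values. Since \eqref{eq:w_sigmoid} is smooth in $\gamma$ for every real $\gamma$ and $s_i=\sum_j w_{ij}>0$ throughout, $G$ is $C^{\infty}$, and $G(\bar{\vx},0)=\vzero$. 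By the computation that produces \eqref{eq:matrix-jacobian}, $D_{\vx}G(\bar{\vx},0)$ is the block $\mJ_\cP$ evaluated at $\bar{\vx}$ with $\gamma=0$; there $w_{ij}\equiv\tfrac12$ on edges, so $\mZ_\cP+\mL_\cP$ is a positive multiple of the grounded (Dirichlet) Laplacian of the connected subgraph $\cG_\cP$ in which at least one persuadable node is adjacent to a zealot — the hypothesis that $\bar{\vx}$ is the \emph{unique} $\gamma=0$ solution forces such an edge, since otherwise the $\gamma=0$ dynamics on $\cG_\cP$ would decouple from the zealots and admit a whole line of consensus states. Hence $\mZ_\cP+\mL_\cP$ is symmetric positive definite, so (exactly as in the discussion after \eqref{eq:matrix-jacobian}) $\mJ_\cP=-\mS_\cP^{-1}(\mZ_\cP+\mL_\cP)$ is similar to a symmetric negative-definite matrix and, in particular, is invertible. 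The IFT then yields $\epsilon_1>0$, a neighborhood $U$ of $\bar{\vx}$, and a unique $C^{1}$ map $\vh$ on $[0,\epsilon_1)$ with $\vh(0)=\bar{\vx}$, $G(\vh(\gamma),\gamma)=\vzero$, and $\vh(\gamma)$ the only zero of $G(\cdot,\gamma)$ in $U$; this is item~1 together with the branch asserted in item~2.

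To upgrade to global uniqueness I would first record an a priori bound. Set $a=\min_{z\in\cZ}x_z$, $b=\max_{z\in\cZ}x_z$, and $K=[a,b]^{n}$; a maximum-principle argument shows that \emph{every} steady state of \eqref{eq:dynamics}, for \emph{every} $\gamma\ge0$, lies in $K$. Indeed, at a steady state each persuadable $x_i$ equals the convex combination $\sum_{j\sim i}(w_{ij}/s_i)x_j$ with strictly positive coefficients, so if $\max_{i\in\cP}x_i$ exceeded $b$ the equality case of this identity, together with connectivity of $\cG_\cP$, would force every persuadable opinion and then the opinion of a zealot adjacent to some persuadable node to equal that maximum, contradicting its being larger than $b$; the lower bound is symmetric. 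Now suppose item~2 fails: there are $\gamma_k\to0^{+}$ and steady states $\vx^{(k)}\ne\vh(\gamma_k)$. The $\vx^{(k)}$ all lie in the compact set $K$, so a subsequence converges to some $\vx^{\star}\in K$, and continuity of $G$ gives $G(\vx^{\star},0)=\vzero$, whence $\vx^{\star}=\bar{\vx}$; but then $\vx^{(k)}\in U$ and $\gamma_k<\epsilon_1$ for all large $k$, forcing $\vx^{(k)}=\vh(\gamma_k)$, a contradiction. Hence there is $\epsilon_2\in(0,\epsilon_1]$ for which $\vx_\gamma:=\vh(\gamma)$ is the unique steady state for every $\gamma\in[0,\epsilon_2)$.

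Finally, for item~3 I would use continuity of the spectrum. The eigenvalues of $\mJ_\cP$ evaluated at $\vx_\gamma$ are real, since that matrix is similar to a symmetric one (as in the discussion after \eqref{eq:matrix-jacobian}); because $\gamma\mapsto\vx_\gamma=\vh(\gamma)$ is $C^{1}$ and $\mJ_\cP$ depends continuously on $(\vx,\gamma)$, the largest of these eigenvalues, $\lambda_{\max}(\gamma)$, is continuous in $\gamma$, and $\lambda_{\max}(0)<0$ because $\mJ_\cP$ at $\gamma=0$ is negative definite. Therefore $\lambda_{\max}(\gamma)<0$ on some interval $[0,\epsilon)$ with $\epsilon\in(0,\epsilon_2]$, so $\mJ$ has no positive eigenvalue there and $\vx_\gamma$ is linearly stable; this last $\epsilon$ serves for all three items. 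I expect item~2 to be the main obstacle: the IFT by itself controls steady states only near $\bar{\vx}$, and excluding steady states that could appear elsewhere as $\gamma$ increases requires the uniform confinement argument, whose delicate point is the equality case of the convex-combination identity — precisely where one uses both connectivity of $\cG_\cP$ and the (hypothesis-forced) existence of a persuadable--zealot edge.
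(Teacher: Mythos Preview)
Your core approach coincides with the paper's: both apply the Implicit Function Theorem after verifying that $\mJ_\cP$ (equivalently $\mM_\cP=-(\mZ_\cP+\mL_\cP)$) is negative definite at $(\bar{\vx},0)$, using that $\mL_\cP$ is positive semidefinite with null space $\mathrm{span}\{\vOnes\}$ (connectivity of $\cG_\cP$) and that $\vOnes^T\mZ_\cP\vOnes>0$. The paper then simply invokes the IFT for items (1)--(2) and reads off (3) from negative definiteness on the resulting interval.

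Where you differ is in the treatment of \emph{global} uniqueness in item (2). The IFT by itself yields uniqueness only in a neighborhood $U$ of $\bar{\vx}$; the paper's proof does not supply a separate argument to exclude steady states outside $U$ for small $\gamma$. Your maximum-principle bound confining every steady state (for every $\gamma\ge0$) to the box $K=[a,b]^n$, followed by the compactness/subsequence contradiction, is a clean and correct way to close that gap. You are also more careful than the paper about why a persuadable--zealot edge must exist: the paper writes ``if there is at least one zealot'' for $\vOnes^T\mZ_\cP\vOnes>0$, whereas you correctly extract this from the assumed uniqueness of the $\gamma=0$ solution. In short, same skeleton, but your version is the more complete argument.
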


    \begin{proof}
        We apply the Implicit Function Theorem \cite{munkres2018analysis} to the function $\mH: \R^{n+1} \rightarrow \R^n$ that is given by $\mH(\gamma, \mathbf{x}) = \mF(\vx)$, where we explicitly treat the parameter $\gamma$ in equation \eqref{eq:dynamics} as an argument of $\mathbf{H}$.  
        It suffices to show that the Jacobian matrix of $\mH$ has full rank when $\gamma = 0$. 
        To demonstrate this, it suffices to verify that $\mM_\cP$ (and thus $\mJ_\cP$) is negative definite.

        When $\gamma = 0$, the nonzero entries of the matrix $\mR_{\cP}$ are $r_{ij} = {1}/{2}$, so $2\mL_{\cP}$ is the combinatorial Laplacian matrix of the adjacency matrix $\mA_\cP$ of the persuadable subgraph.
                The graph $G_\cP$ is connected, by hypothesis, so $\mL_{\cP}$ is positive semidefinite with a unique $0$ eigenvalue that corresponds to the eigenvector $\vOnes$. 
        However, $\mZ_{\cP}$ is also positive semidefinite. Additionally, $\vOnes^T\mZ_{\cP}\vOnes > 0$ if there as at least one zealot. 
        We infer that $\mM_{\cP} = - (\mZ_{\cP} + \mL_{\cP})$ is strictly negative definite and 
                that it has full rank. 
        By the Implicit Function Theorem, there exists a unique function on an open interval around $\gamma = 0$ that satisfies properties (1) and (2).
        Property (3) follows because $\mM_{\cP}$ is
                negative definite on this interval.
    \end{proof}

    In \Cref{fig:small-gamma}, we illustrate \Cref{thm:small-gamma} when $\cG$ is the Zachary Karate Club graph \cite{zachary1977information}. 
    We show (a) the harmonic state, (b) an example steady state from the family that is described by \Cref{thm:small-gamma}, and (c) the complete family of these steady states as a function of $\gamma$.

    \begin{figure}
        \centering
        \includegraphics[width=\textwidth]{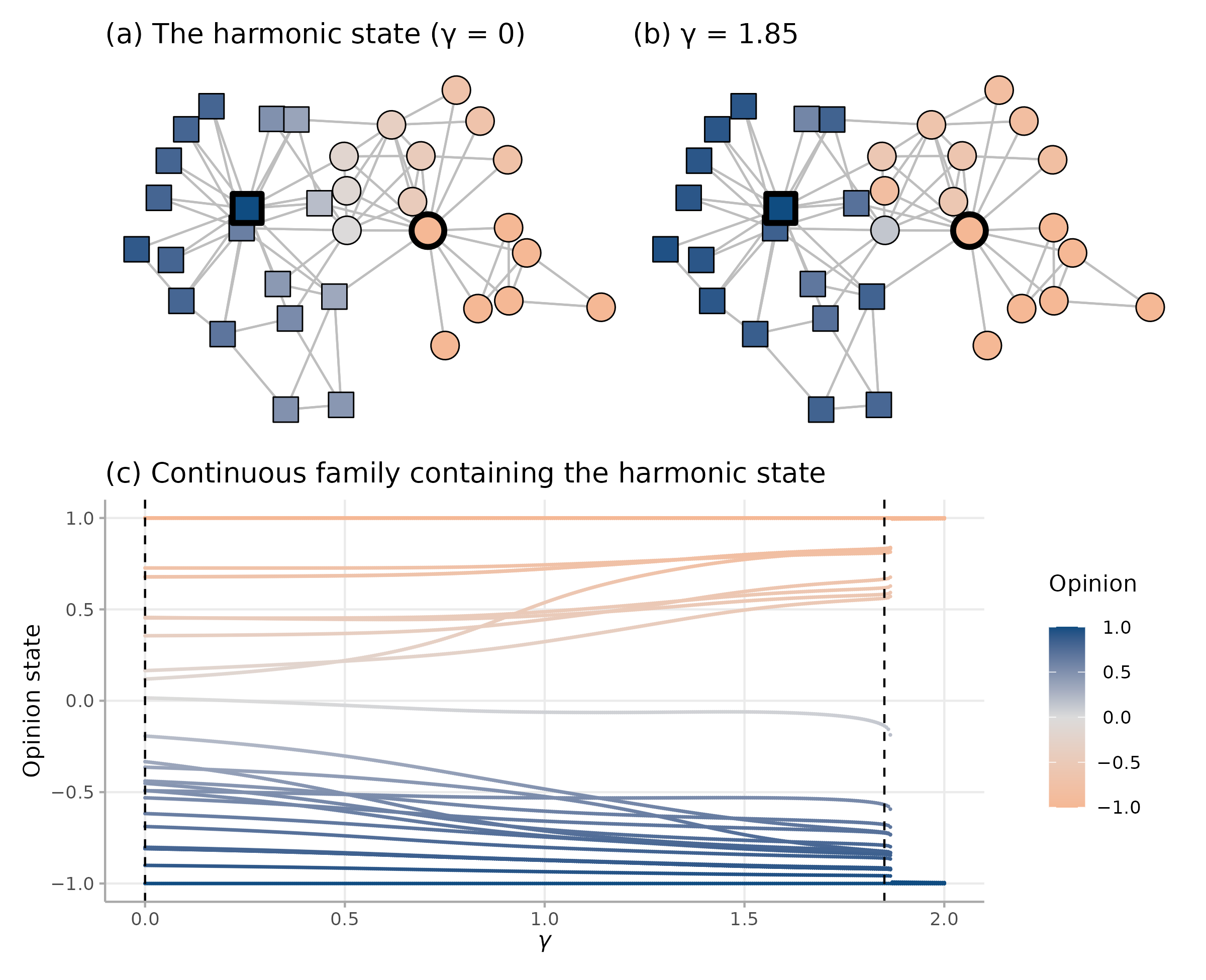}
        \caption{(Top) Two steady states of our SBCM in the family that is described by \Cref{thm:small-gamma} when \acc{$\cG$} is the Zachary Karate Club graph for the confidence bound $\delta = 0.5$ and different values of $\gamma$. 
        The colors correspond to opinion states. 
        We mark the two opposing zealots in the graph with thick black borders. 
        (Bottom) The complete family of steady states, with dashed lines that mark the values of $\gamma$ of
        the above graph visualizations. 
        For $\gamma \gtrapprox 1.9$, the Jacobian matrix is singular and the 
        guarantees
        of \Cref{thm:small-gamma} no longer hold.} \label{fig:small-gamma}
    \end{figure}

    \subsection{Large $\gamma$}
    
    In this subsection, we present two theorems about the behavior of equation \eqref{eq:dynamics} for large but finite values of $\gamma$.

    First, we use \Cref{prop:instability-condition} to bound the extent to which a node can separate from its neighbors in opinion space at a steady state.
    
    \begin{thm}\label{thm:isolation-bound}
        Let $\vx$ be a linearly stable steady state of equation \eqref{eq:dynamics}
        with parameters $\gamma$ and $\delta$. 
        For each node $i \in \cP$, there 
        exists $j \sim i$ such that 
        \begin{align} \label{eq:isolation-condition}
            \abs{x_i - x_j} \leq \sqrt{\max\{\delta, \gamma^{-1}\}}\,. 
        \end{align}
    \end{thm}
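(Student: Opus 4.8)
The plan is to obtain \Cref{thm:isolation-bound} as an essentially immediate corollary of \Cref{prop:instability-condition}. Recall the remark made just after that proposition: if $\vx$ is a linearly stable steady state, then the hypothesis \eqref{eq:instability-condition} must fail at \emph{every} persuadable node, so for each $i \in \cP$ there exists a neighbor $j \sim i$ with $(1 - w_{ij})(x_j - x_i)^2 < \frac{1}{2\gamma}$. Fixing such an $i$ and $j$ and writing $u \triangleq (x_i - x_j)^2$, it remains only to convert this inequality into the bound $u \le \max\{\delta, \gamma^{-1}\}$ asserted in \eqref{eq:isolation-condition}. We may assume $\gamma > 0$, since when $\gamma = 0$ the right-hand side of \eqref{eq:isolation-condition} is infinite and there is nothing to prove.

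First I would dispose of the case $u \le \delta$, where trivially $\abs{x_i - x_j} = \sqrt{u} \le \sqrt{\delta} \le \sqrt{\max\{\delta, \gamma^{-1}\}}$. In the complementary case $u > \delta$, I would use the explicit form of the influence function in \eqref{eq:w_sigmoid}: since $1 - w_{ij} = \frac{e^{\gamma(u - \delta)}}{1 + e^{\gamma(u-\delta)}}$ is a strictly increasing function of $u$ that equals $\tfrac{1}{2}$ precisely at $u = \delta$, the case hypothesis $u > \delta$ gives $1 - w_{ij} > \tfrac{1}{2}$. Substituting this into the stability inequality yields $\tfrac{1}{2} u < (1 - w_{ij}) u < \frac{1}{2\gamma}$, hence $u < \gamma^{-1}$ and $\abs{x_i - x_j} = \sqrt{u} < \sqrt{\gamma^{-1}} \le \sqrt{\max\{\delta, \gamma^{-1}\}}$. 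Combining the two cases proves \eqref{eq:isolation-condition} for the chosen neighbor $j$ of each node $i$.

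The argument is short, so there is no substantial obstacle; the only point requiring care is the order of the case split. One must invoke the hypothesis $u > \delta$ \emph{before} bounding $1 - w_{ij}$ from below, since that lower bound is exactly the statement that the opinion gap exceeds the confidence bound, and the monotonicity of $1 - w_{ij}$ in $u$ is what makes it valid. It is also worth checking the strict-versus-weak inequalities: the consequence of \Cref{prop:instability-condition} is strict, which is why we obtain $u < \gamma^{-1}$ (strict) in the second case and can therefore afford the weak inequality $\le$ in the statement of \Cref{thm:isolation-bound}.
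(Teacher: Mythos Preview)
Your proof is correct and follows essentially the same approach as the paper: invoke the contrapositive of \Cref{prop:instability-condition}, split into the cases $(x_i-x_j)^2 \le \delta$ and $(x_i-x_j)^2 > \delta$, and in the latter case use that $1-w_{ij} > \tfrac{1}{2}$ (equivalently, the paper bounds the factor $1 + e^{-\gamma((x_i-x_j)^2-\delta)}$ above by $2$). One minor quibble: the strict negation of \eqref{eq:instability-condition} gives $(1-w_{ij})(x_j-x_i)^2 \le \tfrac{1}{2\gamma}$ with a weak inequality, not a strict one, but this is harmless for the conclusion you state.
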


    \begin{proof}
        Fix a node $i \in \cP$. 
        Because $\vx$ is linearly stable, by \Cref{prop:instability-condition}, each node $i$ has a neighbor $j$ such that $(1 - w_{ij})(x_j - x_i)^2 \leq \frac{1}{2\gamma}$.
        Expanding this condition yields 
        \begin{align} \label{eq:isolation-bound}
            (x_j - x_i)^2 \leq \frac{1}{2\gamma} \paren*{1 + e^{-\gamma(x_i - x_j)^2 + \gamma \delta}} \,. 
        \end{align}
                If $(x_i - x_j)^2 \leq \delta$, then the bound \eqref{eq:isolation-condition} holds. 
                Now suppose that $(x_i - x_j)^2 > \delta$. 
        The exponent in the right-hand side of \eqref{eq:isolation-bound} is negative, so the 
                right-hand side is bounded above by $\gamma^{-1}$. 
        This again yields \eqref{eq:isolation-condition} and completes the proof. 
            \end{proof}

    \Cref{thm:isolation-bound} gives some insight into the structure of linearly stable steady states of equation \eqref{eq:dynamics} for large $\gamma$. 
            It is also natural to ask about the relationship between the steady states of our SBCM at large $\gamma$ and the steady states of the HK model. 
    Our next theorem shows that steady states of our
     SBCM converge to steady states of the HK model as $\gamma \rightarrow \infty$.

     Before proving
          the theorem, we establish some additional notation and discuss the convergence properties of $\mF$.
         Fix the confidence bound $\delta$. 
    We use the notation $\mF_{\gamma}$ to explicitly track the dependence of $\mF$ on the parameter $\gamma$.  
    Let $\cX(\gamma)$ denote the set of steady states $\vx$ of our SBCM with parameter $\gamma$.
       Let $\mF_\infty$ denote the pointwise limit $\lim_{\gamma \rightarrow \infty} \mF$, and let $\cX({\infty})$ denote the steady states of $\mF_\infty$. 
    Fix $\Delta > 0$ and $a > 0$, and define
        \begin{align*}
        \cS(a) \triangleq  \curlybrace*{\vx : \abs{\abs{x_i - x_j} - \sqrt{\delta}} \geq a \quad \text{for all} \quad i \sim j \quad \mathrm{and} \quad \norm{\vx}_\infty \leq \frac{\Delta}{2}}\;. 
    \end{align*}
    
    Finally, let $\hat{\cX}(a;\gamma) \triangleq \cS(a) \cap \cX(\gamma)$.

    \begin{lm} \label{lm:continuity}
        For any $a > 0$, the following statements hold:
        \begin{enumerate}
            \item The set $\cS(a)$ is complete. 
            \item As $\gamma \rightarrow \infty$, we have that $\mF \rightarrow \mF_\infty$ uniformly on $\cS(a)$. 
        \end{enumerate} 
    \end{lm}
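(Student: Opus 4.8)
The two claims are essentially independent, so I would treat them in turn. \emph{Part (1).} It suffices to check that $\cS(a)$ is closed in $\R^{n}$, since a closed subset of a complete metric space is complete. And
$\cS(a) = \curlybrace*{\vx : \norm{\vx}_\infty \leq \tfrac{\Delta}{2}} \cap \bigcap_{i \sim j}\curlybrace*{\vx : \abs{\abs{x_i - x_j} - \sqrt{\delta}} \geq a}$
is a finite intersection of closed sets: the first set is a closed ball, and each set in the intersection is the preimage of the closed half-line $[a,\infty)$ under the continuous map $\vx \mapsto \abs{\abs{x_i - x_j} - \sqrt{\delta}}$. Hence $\cS(a)$ is closed, and therefore complete.

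\emph{Part (2): the weights.} The plan is to push an explicit exponential rate for $w \to w^{(\infty)}$ through the ratio defining $\mF$. First I would record a uniform gap: for $\vx \in \cS(a)$ and $i \sim j$, writing $t = \abs{x_i - x_j} \in [0,\Delta]$ gives $\abs{t^2 - \delta} = \abs{t - \sqrt{\delta}}(t + \sqrt{\delta}) \geq a\sqrt{\delta}$ when $\delta > 0$ (and $\geq a^2$ when $\delta = 0$), so there is $\eta = \eta(a,\delta) > 0$ with $\abs{(x_i - x_j)^2 - \delta} \geq \eta$ across every edge, for every $\vx \in \cS(a)$. From this I would derive the entrywise estimate $\abs{w(x_i,x_j) - w^{(\infty)}(x_i,x_j)} \leq e^{-\gamma\eta}$, uniformly on $\cS(a)$ and over all pairs: for $i \not\sim j$ both sides vanish; for $i \sim j$ with $(x_i-x_j)^2 > \delta$ the exponent $\gamma((x_i-x_j)^2 - \delta)$ is at least $\gamma\eta$, so $w_{ij} \leq e^{-\gamma\eta}$ while $w^{(\infty)}(x_i,x_j) = 0$; and for $(x_i-x_j)^2 < \delta$ the exponent is at most $-\gamma\eta$, so $1 - w_{ij} \leq e^{-\gamma\eta}$ while $w^{(\infty)}(x_i,x_j) = 1$. (The boundary case $(x_i-x_j)^2 = \delta$, hence the value $\tfrac12$ in \eqref{eq:winfty}, cannot occur on $\cS(a)$ because $a > 0$.)

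\emph{Part (2): the ratio.} For a persuadable node $i$ write $\mF(\vx)_i = N_i/s_i$ with $N_i = \sum_{j\sim i}w_{ij}(x_j - x_i)$, and $\mF_\infty(\vx)_i = N_i^{\infty}/s_i^{\infty}$ with $s_i^{\infty} = \sum_{j\sim i}w^{(\infty)}(x_i,x_j)$; for a zealot, $\mF(\vx)_i \equiv 0 \equiv \mF_\infty(\vx)_i$. Using the weight estimate together with $\abs{x_j - x_i} \leq \Delta$, both $\abs{N_i - N_i^{\infty}}$ and $\abs{s_i - s_i^{\infty}}$ are $O(d_i\Delta\, e^{-\gamma\eta})$, uniformly on $\cS(a)$. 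Provided $s_i$ and $s_i^{\infty}$ are uniformly bounded below by a positive constant, the elementary bound
\begin{align*}
    \abs{\frac{N_i}{s_i} - \frac{N_i^{\infty}}{s_i^{\infty}}} \;\leq\; \frac{\abs{N_i - N_i^{\infty}}\, s_i^{\infty} + \abs{N_i^{\infty}}\,\abs{s_i - s_i^{\infty}}}{s_i\, s_i^{\infty}}
\end{align*}
then tends to $0$ uniformly as $\gamma \to \infty$, which gives $\norm{\mF(\vx) - \mF_\infty(\vx)}_\infty \to 0$ uniformly on $\cS(a)$.

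\emph{The main obstacle.} The real point is the uniform lower bound on the strengths $s_i = \sum_{j\sim i}w_{ij}$. When node $i$ has a neighbor $j$ within the confidence bound, that neighbor satisfies $(x_i-x_j)^2 \leq \delta - \eta$ on $\cS(a)$, so $w_{ij} = (1 + e^{\gamma((x_i-x_j)^2 - \delta)})^{-1} \geq (1 + e^{-\gamma\eta})^{-1} \geq \tfrac12$, whence $s_i \geq \tfrac12$ for \emph{every} $\gamma > 0$ and the bound above closes. The delicate case is a persuadable node all of whose neighbors lie strictly outside the confidence bound: there $s_i \to 0$, and $\lim_{\gamma\to\infty}\mF(\vx)_i$ concentrates on the nearest of those neighbors and so varies discontinuously with $\vx$ near ties of those distances, so uniformity over all of $\cS(a)$ must be read carefully. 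I expect to dispose of this by restricting to the domain of $\mF_\infty$: intersecting $\cS(a)$ with the closed set of opinion states in which every persuadable node has a neighbor within the confidence bound (equivalently, invoking the standard HK convention that such an isolated node does not move), on which $s_i \geq \tfrac12$ uniformly. That is the only step I anticipate requiring genuine care; the rest is the routine estimation sketched above.
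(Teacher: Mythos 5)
Your argument follows essentially the same route as the paper's. Part (1) is the same in substance (the paper describes $\cS(a)$ as the intersection of a closed ball with a finite union of products of closed intervals rather than as a finite intersection of preimages of closed sets, but the content is identical). For part (2), both proofs extract a uniform gap from the condition $\abs{\abs{x_i - x_j} - \sqrt{\delta}} \geq a$, deduce an entrywise estimate showing $w_{ij} \to w^{(\infty)}_{ij}$ uniformly on $\cS(a)$ (your version, with the explicit rate $e^{-\gamma\eta}$, is sharper than the paper's), and then push this through the quotient defining $f_i$ after bounding $\abs{x_j - x_i}$ by $\Delta$. The one genuine difference is the device used at the quotient step: you apply the standard bound for a difference of ratios, which requires $s_i$ and $s_i^{\infty}$ to be bounded below, whereas the paper rewrites $w_{ij}/s_i = 1/\sum_{j' \sim i}(w_{ij'}/w_{ij})$, whose denominator contains the term $j' = j$ and is therefore automatically $\geq 1$. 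That trick buys exactly the lower bound you were seeking, but only in the case you already dispose of yourself: when node $i$ has an in-bound neighbor, $s_i \geq 1/2$ and $s_i^{\infty} \geq 1$, and your direct quotient bound closes just as well.

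The ``main obstacle'' you flag --- a persuadable node all of whose neighbors lie strictly outside the confidence bound --- is real, and you should know that the paper's proof does not actually dispose of it either: in that case $s_i^{(\infty)} = 0$, the paper's denominator $\sum_{j'} w_{ij'}/w_{ij}$ has terms that diverge or vanish according to which out-of-bound neighbor is nearest, and its convergence is not uniform near ties among those distances. As you observe, the pointwise limit of $f_i$ there concentrates on the nearest neighbor(s) and is discontinuous in $\vx$ across such ties; since ties of this kind can occur inside $\cS(a)$ (when $\Delta$ is large enough relative to $\sqrt{\delta} + a$) and each $\mF_\gamma$ is continuous, uniform convergence on all of $\cS(a)$ genuinely fails for such configurations. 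So some restriction of the kind you propose (every persuadable node has a neighbor within the confidence bound, whence $s_i \geq 1/2$ and $s_i^{\infty} \geq 1$) is needed to make the statement literally correct. Your writeup is therefore at least as careful as the paper's on the one point where care is required; the only caveat is that your fix proves a slightly weaker statement than the lemma as written, which is a defect of the lemma rather than of your proof, and it does not affect the downstream use of the lemma in the paper's Theorem on accumulation points provided the same restriction is carried along.
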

     
    \begin{proof}
        The completeness of $\cS(a)$ follows from the fact that $\cS(a)$ is the intersection of a closed ball with a finite union of finite products of closed intervals.

        Recall that $w_{ij}^{(\infty)}$ is the 
                        step function in equation \eqref{eq:winfty}. 
        We define $s_{i}^{(\infty)} = \sum_{j \sim i} w^{(\infty)}_{ij}$. 
        To prove the uniform convergence of $\mF \rightarrow \mF_\infty$, fix $\epsilon > 0$ and $\vx \in \cS(a)$. 
        We calculate
        \begin{align*}
            \abs{f_{i}(\vx) - f_{\infty,i}(\vx)} = \sum_{j \sim i} (x_j - x_i)\sqbracket*{\frac{w_{ij}}{s_{i}} - \frac{w^{(\infty)}_{ij}}{s^{(\infty)}_{ij}}}\;.
        \end{align*}        
        Applying the Cauchy--Schwartz inequality yields 
        \begin{align}
            \abs{f_{i}(\vx) - f_{\infty,i}(\vx)} &\leq \sum_{j \sim i} \abs{x_j - x_i}\abs{\frac{w_{ij}}{s_{i}} - \frac{w^{(\infty)}_{ij}}{s^{(\infty)}_{ij}}} \nonumber \\ 
            &\leq \Delta\sum_{j \sim i} \abs{\frac{w_{ij}}{s_{i}} - \frac{w^{(\infty)}_{ij}}{s^{(\infty)}_{ij}}}\;,\label{eq:ineq-1}
        \end{align}
        where the second inequality in
        \eqref{eq:ineq-1} follows from the condition on $\cS(a)$ that $\norm{\vx}_\infty \leq \Delta$.
        We now show that $w_{ ij}\rightarrow w^{(\infty)}_{ij}$ uniformly on $\cS(a)$ as $\gamma \rightarrow \infty$. 
        Without loss of generality, we assume that $(x_j - x_i)^2 > \delta + a$;
        the case $(x_j - x_i)^2 < \delta - a$ is analogous.
        Because $(x_j - x_i)^2 > \delta + a$, we know that $w_{ij} \leq \frac{1}{1 + e^{\gamma a}}\rightarrow 0 = w^{(\infty)}_{ij}$. 
                For fixed $\epsilon > 0$, choosing $\Gamma = \frac{1}{a}\ln \left(\frac{1-\epsilon}{\epsilon}\right)$ implies that $\abs{w_{ij} - w^{(\infty)}_{ij}} < \epsilon$ for any $\vx$.
            
        We write 
        \begin{align*}
            \frac{w_{ ij}}{s_{i}} = \frac{1}{\sum_{j' \sim i}\frac{w_{ij'}}{w_{ij}}}\;.          
        \end{align*}
        The sum in the denominator converges uniformly and is bounded below by $1$. 
        We thus infer that $\frac{w_{ij}}{s_{i}}\rightarrow \frac{w^{(\infty)}_{ij}}{s^{(\infty)}_{ij}}$ uniformly. 
        Therefore, we can choose
                $\Gamma$ such that
        \begin{align*}
            \abs{\frac{w_{ij}}{s_{i}} - \frac{w^{(\infty)}_{ij}}{s^{(\infty)}_{ij}}} < \frac{\epsilon}{n^2\Delta} 
        \end{align*}
        for all $\gamma > \Gamma$. Inserting 
        this inequality into the bound \eqref{eq:ineq-1} yields 
        \begin{align*}
            \abs{f_{i}(\vx) - f_{\infty,i}(\vx)} < \frac{\epsilon}{n}\;,
        \end{align*}
        from which it follows that $\norm{\mF(\vx) - \mF_\infty(\vx)} < \epsilon$ for all $\vx \in \cS(a)$. 
        This completes the proof. 
    \end{proof}

    \begin{thm}\label{thm:HK-approx}
        Let $\{\gamma^{(\ell)}\}_\ell$ be a nondecreasing and unbounded sequence, and
        let $\cA(a)$ be the set of accumulation points of the set $\bigcup_\ell \hat{\cX}(a;{\gamma^{(\ell)}})$. 
        We then have that $\cA(a) \subseteq \hat{\cX}(a;\infty)$. 
    \end{thm}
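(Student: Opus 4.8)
\emph{Proof idea.} The plan is to show that every $\vx^* \in \cA(a)$ is simultaneously an element of $\cS(a)$ and a zero of $\mF_\infty$; since $\hat{\cX}(a;\infty) = \cS(a) \cap \cX(\infty)$ by definition, this is exactly the claim. First I would fix $\vx^* \in \cA(a)$ and choose points $\vx^{(k)} \in \bigcup_\ell \hat{\cX}(a;\gamma^{(\ell)})$ with $\vx^{(k)} \to \vx^*$, writing $\vx^{(k)} \in \hat{\cX}(a;\gamma^{(\ell_k)})$; passing to a subsequence, I may assume $\ell_k \to \infty$, so that $\gamma^{(\ell_k)} \to \infty$ because $\{\gamma^{(\ell)}\}$ is nondecreasing and unbounded. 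Since each $\vx^{(k)} \in \cS(a)$ and $\cS(a)$ is complete, hence closed, by \Cref{lm:continuity}(1), the limit point satisfies $\vx^* \in \cS(a)$; this is the only thing completeness is needed for, and it matters because it keeps $\vx^*$ inside the set on which \Cref{lm:continuity}(2) gives us uniform control.

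It then remains to show $\mF_\infty(\vx^*) = \vzero$. For each $k$ I would use the triangle inequality
\begin{align*}
    \norm{\mF_\infty(\vx^*)} \leq \norm{\mF_\infty(\vx^*) - \mF_{\gamma^{(\ell_k)}}(\vx^*)} + \norm{\mF_{\gamma^{(\ell_k)}}(\vx^*) - \mF_{\gamma^{(\ell_k)}}(\vx^{(k)})} + \norm{\mF_{\gamma^{(\ell_k)}}(\vx^{(k)})}\,.
\end{align*}
The third term vanishes identically, since $\vx^{(k)} \in \cX(\gamma^{(\ell_k)})$. The first term is at most $\sup_{\vy \in \cS(a)} \norm{\mF_{\gamma^{(\ell_k)}}(\vy) - \mF_\infty(\vy)}$, which tends to $0$ by the uniform convergence $\mF \to \mF_\infty$ on $\cS(a)$ from \Cref{lm:continuity}(2) (applied at $\vx^* \in \cS(a)$). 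For the middle term, I would add and subtract $\mF_\infty(\vx^*)$ and $\mF_\infty(\vx^{(k)})$ to bound it by $\norm{\mF_{\gamma^{(\ell_k)}}(\vx^*) - \mF_\infty(\vx^*)} + \norm{\mF_\infty(\vx^*) - \mF_\infty(\vx^{(k)})} + \norm{\mF_\infty(\vx^{(k)}) - \mF_{\gamma^{(\ell_k)}}(\vx^{(k)})}$: the two outer terms again go to $0$ by \Cref{lm:continuity}(2) (both $\vx^*$ and $\vx^{(k)}$ lie in $\cS(a)$), and the middle term goes to $0$ because $\mF_\infty$, being a uniform limit on $\cS(a)$ of the continuous maps $\mF_\gamma$, is itself continuous on $\cS(a)$, and $\vx^{(k)} \to \vx^*$. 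Sending $k \to \infty$ in the first display forces $\norm{\mF_\infty(\vx^*)} = 0$, so $\vx^* \in \cX(\infty)$; together with $\vx^* \in \cS(a)$ this gives $\vx^* \in \hat{\cX}(a;\infty)$.

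The main obstacle is that the approximating points are steady states of \emph{different} maps $\mF_{\gamma^{(\ell_k)}}$, so $\gamma$ and the spatial variable move together and one cannot simply invoke continuity of a single function at $\vx^*$; the device that overcomes this is the restriction to $\cS(a)$, where \Cref{lm:continuity}(2) makes the convergence $\mF_\gamma \to \mF_\infty$ uniform, allowing the three limits (in $\gamma$, in the spatial variable, and between the finite and infinite models) to be interchanged as above. A secondary, bookkeeping subtlety is whether accumulation points of the \emph{union} $\bigcup_\ell \hat{\cX}(a;\gamma^{(\ell)})$ could be contributed by only finitely many of the $\gamma^{(\ell)}$ (in which case the extraction $\gamma^{(\ell_k)} \to \infty$ above is not automatic); I would handle this either by noting that each $\hat{\cX}(a;\gamma^{(\ell)}) = \cS(a) \cap \mF_{\gamma^{(\ell)}}^{-1}(\vzero)$ is closed (so such accumulation points are already genuine steady states of a finite-$\gamma$ model and can be discussed separately) or, more cleanly, by reading $\cA(a)$ as the outer limit of the sequence of sets $\{\hat{\cX}(a;\gamma^{(\ell)})\}_\ell$, which is evidently the intended notion and makes the extraction immediate.
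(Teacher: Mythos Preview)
Your argument is correct and follows essentially the same route as the paper: use \Cref{lm:continuity}(1) to place the limit in $\cS(a)$, then combine the uniform convergence from \Cref{lm:continuity}(2) with the induced continuity of $\mF_\infty$ on $\cS(a)$ to force $\mF_\infty(\vx^*)=\vzero$. The only cosmetic difference is that the paper's triangle inequality is tighter: it writes $\norm{\mF_\infty(\vx)} = \norm{\mF_\infty(\vx) - \mF_{\gamma^{(\ell)}}(\vx^{(\ell)})} \le \norm{\mF_\infty(\vx) - \mF_\infty(\vx^{(\ell)})} + \norm{\mF_\infty(\vx^{(\ell)}) - \mF_{\gamma^{(\ell)}}(\vx^{(\ell)})}$ and handles the two terms directly, whereas your three-term split and subsequent re-splitting of the middle term ends up reproducing exactly these two pieces plus a redundant copy of the first; your bookkeeping remark about extracting $\gamma^{(\ell_k)}\to\infty$ is a point the paper leaves implicit.
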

   
    \begin{proof}
        We again
        use the notation $\mF_{\gamma}$ to explicitly track the dependence of $\mF$ on the parameter $\gamma$.  
        Let $\vx \in \cA(a)$ and fix $\epsilon > 0$. 
        By definition, there exists a sequence $\{\vx^{(\ell)}\}_\ell$ such that $\mF_{\gamma^{(\ell)}}(\vx^{(\ell)}) = \vzero$ and $\vx^{(\ell)} \rightarrow \vx$.

        We first compute 
        \begin{align}
            \norm{\mF_\infty(\vx)} &= \norm{\mF_\infty(\vx) - \mF_{\gamma^{(\ell)}}(\vx^{(\ell)})} \nonumber \\ 
            &\leq \underbrace{\norm{\mF_\infty(\vx) - \mF_{\infty}(\vx^{(\ell)})}}_{t_1} + \underbrace{\norm{\mF_\infty(\vx^{(\ell)}) - \mF_{\gamma^{(\ell)}}(\vx^{(\ell)})}}_{t_2} \label{eq:triangle-inequality}\;.
        \end{align}
        We now use \Cref{lm:continuity} and the fact that $\vx^{(\ell)} \in \hat{\cX}(a;\gamma^{(\ell)}) \subseteq \cS(a)$ for each $\ell$. 
        First, the completeness of $\cS(a)$ implies that $\vx \in \cS(a)$. 
        Because $\mF_{\gamma} \rightarrow \mF_{\infty}$ uniformly on $\cS(a)$, we know that $\mF_{\infty}$ is continuous on $\cS(a)$. 
        Therefore, we can select $\ell_1$ such that $t_1 < \epsilon/2$ for all $\ell > \ell_1$.
                The uniform convergence of $\mF_\gamma$ to $\mF_{\infty}$ on $\cS(a)$ implies that we can select $\ell_2$ such that $t_2 < \epsilon/2$ for all $\ell > \ell_2$.
                We choose $\bar{\ell} = \max(\ell_1, \ell_2)$; the inequality \eqref{eq:triangle-inequality} then implies that $\norm{\mF_{\infty}(\vx)} \leq \epsilon$.
        Because $\epsilon$ is arbitrary and in particular does not depend on $\ell$, we conclude that $\norm{\mF_{\infty}(\vx)} = 0$ and thus that $\mF_{\infty}(\vx) = \vzero$. 
        It follows that $\vx \in \hat{\cX}(a;\infty)$. This completes the proof.  
    \end{proof}

    \Cref{thm:HK-approx} asserts
        that the steady states of our SBCM that are bounded away from the manifolds $(x_i - x_j)^2 = \delta$ converge to steady states of the modified HK dynamics
        $\mF_\infty$. 
    We now show that a sequence of 
        linearly stable steady states of our SBCM must converge to
            a steady state of the unmodified HK model (which has the influence function \eqref{eq:HK-influence}).

    Let $\bar{\cX}(\gamma)$ denote the set of linearly stable steady states of equation \eqref{eq:dynamics} with parameter $\gamma$, and let 
    $\bar{\cA}$ be the set of accumulation points of the set $\bigcup_{\ell}\bar{\cX}(\gamma)$. 
 
    \begin{prop} \label{prop:stable-sequence}
        Suppose that $\vx$ has the property that $(x_i - x_j)^2  = \delta$ for some $i \sim j$. 
        We then have that $\vx \notin \bar{\cA}$. 
    \end{prop}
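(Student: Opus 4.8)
I would argue by contradiction. Suppose $\vx \in \bar{\cA}$: there is a sequence $\vx^{(\ell)}$ of linearly stable steady states of our SBCM, with parameters $\gamma^{(\ell)} \to \infty$, such that $\vx^{(\ell)} \to \vx$. Fix an edge $\{i,j\}$ with $(x_i - x_j)^2 = \delta$; we may assume $i \in \cP$ (an edge joining two zealots has length $\sqrt\delta$ for every $\gamma$, a degenerate situation outside the intent of the statement). Below, $w^{(\ell)}_{kl}$, $s^{(\ell)}_i$, and $\mM^{(\ell)}_\cP$ denote the objects of \Cref{sec:basicresults} evaluated at $\vx^{(\ell)}$ with parameter $\gamma^{(\ell)}$, and I abbreviate $d^{(\ell)}_{kl} = (x^{(\ell)}_k - x^{(\ell)}_l)^2$, so that $d^{(\ell)}_{ij} \to \delta > 0$.

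The first step uses stability to control the weight on the critical edge. Since $\mJ_\cP$ is similar to $\mS^{-1/2}_\cP \mM_\cP \mS^{-1/2}_\cP$ and, by hypothesis, has no positive eigenvalue, we have $\mM^{(\ell)}_\cP \preceq 0$; in particular its $i$th diagonal entry, $\sum_{l \sim i} w^{(\ell)}_{il}\bigl[2\gamma^{(\ell)}(1 - w^{(\ell)}_{il})d^{(\ell)}_{il} - 1\bigr]$, is nonpositive. Every summand of $\sum_{l\sim i} w^{(\ell)}_{il}(1 - w^{(\ell)}_{il})d^{(\ell)}_{il}$ is nonnegative while the total is at most $s^{(\ell)}_i / (2\gamma^{(\ell)}) \to 0$, so each summand vanishes in the limit; applied to $l = j$ and using $d^{(\ell)}_{ij} \to \delta > 0$, this forces $w^{(\ell)}_{ij}(1 - w^{(\ell)}_{ij}) \to 0$. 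Put differently: if $w^{(\ell)}_{ij}(1-w^{(\ell)}_{ij})$ were bounded away from $0$ along a subsequence, the $l = j$ summand of the $i$th diagonal entry would equal $2\gamma^{(\ell)} w^{(\ell)}_{ij}(1-w^{(\ell)}_{ij})d^{(\ell)}_{ij} - w^{(\ell)}_{ij} \to +\infty$, while each of the finitely many other summands stays at least $-1$, contradicting $\mM^{(\ell)}_\cP \preceq 0$. Passing to a subsequence, we may therefore assume $w^{(\ell)}_{ij} \to \bar w \in \{0,1\}$; equivalently, $d^{(\ell)}_{ij}$ tends to $\delta$ strictly from one side, with $\gamma^{(\ell)}\bigl\lvert d^{(\ell)}_{ij} - \delta\bigr\rvert \to \infty$.

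The remaining step, which I expect to be the crux, rules out these ``saturated'' alternatives $\bar w \in \{0,1\}$. Running the first step at the persuadable endpoint of every critical edge shows that each such weight tends to $0$ or $1$, so after a further subsequence on which all $w^{(\ell)}_{kl}$ converge, taking the limit in $\mF_{\gamma^{(\ell)}}(\vx^{(\ell)}) = \vzero$ displays $\vx$ as a steady state of the limiting operator with each critical edge definitively resolved ``on'' or ``off.'' In the case $\bar w = 0$, one has $(1 - w^{(\ell)}_{ij})d^{(\ell)}_{ij} \to \delta > 1/(2\gamma^{(\ell)})$ for large $\ell$, so by the observation following \Cref{prop:instability-condition}, node $i$ must have a \emph{different} neighbor $k$ (constant along a further subsequence) with $(1 - w^{(\ell)}_{ik})(x^{(\ell)}_k - x^{(\ell)}_i)^2 \to 0$, whence $(x_k - x_i)^2 \le \delta$ and $w^{(\ell)}_{ik} \to 1$; the case $\bar w = 1$, with $d^{(\ell)}_{ij} \to \delta^-$, is symmetric. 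To close the argument I would feed this into \Cref{lm:continuity} with a shrinking radius $a^{(\ell)}$ of order $\bigl\lvert \lvert x^{(\ell)}_i - x^{(\ell)}_j\rvert - \sqrt\delta\bigr\rvert$ — which satisfies $\gamma^{(\ell)} a^{(\ell)} \to \infty$ precisely because $\gamma^{(\ell)}\bigl\lvert d^{(\ell)}_{ij} - \delta\bigr\rvert \to \infty$, so that $\mF_{\gamma^{(\ell)}}$ and $\mF_\infty$ still agree at $\vx^{(\ell)}$ up to vanishing error — together with \Cref{thm:isolation-bound}, to show that $\vx^{(\ell)}$ is pinned so tightly to the manifold $(x_i - x_j)^2 = \delta$ that $\mM^{(\ell)}_\cP \preceq 0$ cannot persist. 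The delicate point is exactly this: once the sigmoid on the critical edge has saturated, the crude Rayleigh test vector $\ve_i$ no longer detects instability, and one must instead combine the steady-state equations with the quantitative convergence rate from \Cref{lm:continuity}.
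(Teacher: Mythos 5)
Your first step is correct, and it is in fact a sharper use of the stability hypothesis than the paper's own proof makes. The paper's argument is much shorter: it observes that on the manifold $(x_i-x_j)^2=\delta$ the pointwise limit \eqref{eq:winfty} gives $w^{(\infty)}_{ij}=\tfrac12$, asserts on that basis that $\bigl(1-w_{ij}(\vx^{(\ell)})\bigr)\bigl(x^{(\ell)}_j-x^{(\ell)}_i\bigr)^2\to\delta/2$, notes that this eventually exceeds $1/(2\gamma^{(\ell)})\to 0$, and concludes instability from \Cref{prop:instability-condition}. Your diagonal-entry computation shows that along a \emph{stable} sequence the product $w^{(\ell)}_{ij}(1-w^{(\ell)}_{ij})$ must tend to $0$, i.e., the weight saturates to $0$ or $1$ rather than converging to $\tfrac12$; so you have correctly located the delicate point, namely that the paper's displayed limit rests on interchanging $\gamma\to\infty$ with $\vx^{(\ell)}\to\vx$ near the critical manifold, which is not automatic. (You also implicitly repair a second looseness: \Cref{prop:instability-condition} requires its condition for \emph{all} neighbors of $i$, while the paper verifies it only for the single critical neighbor; your use of the necessary condition $m_{ii}\le 0$ sidesteps this.)

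The difficulty is that your proposal stops exactly where the remaining work begins: having reduced to the saturated alternatives $\bar w\in\{0,1\}$, you do not actually derive a contradiction in either case, and the second half of the argument is a plan rather than a proof. Concretely, (i) the claim that $\bar w=1$ is ``symmetric'' to $\bar w=0$ is not evident --- when $w^{(\ell)}_{ij}\to 1$ one has $(1-w^{(\ell)}_{ij})d^{(\ell)}_{ij}\approx \delta\, e^{\gamma^{(\ell)}(d^{(\ell)}_{ij}-\delta)}$, which may or may not exceed $1/(2\gamma^{(\ell)})$ depending on the rate at which $\gamma^{(\ell)}(d^{(\ell)}_{ij}-\delta)\to-\infty$, so the diagonal test yields nothing there; and (ii) \Cref{lm:continuity} is proved only for a \emph{fixed} margin $a>0$, with a threshold $\Gamma=\frac1a\ln\frac{1-\epsilon}{\epsilon}$ that diverges as $a\to 0$, so invoking it with a shrinking $a^{(\ell)}$ requires a quantitative restatement that you have not supplied --- and even granting it, you never specify what incompatibility with $\mM^{(\ell)}_\cP\preceq 0$ would result. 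As written, the attempt proves a genuinely useful structural fact about stable sequences approaching the manifold, but it does not prove the proposition.
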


    \begin{proof}
        We prove this result by contradiction. Suppose 
                that $\vx \in \bar{\cA}$. 
                There exist sequences $\gamma^{(\ell)}\rightarrow \infty$ and $\vx^{(\ell)}\rightarrow \vx$ such that $\vx^{(\ell)}$ is a linearly stable steady state of equation \eqref{eq:dynamics}.
        Because $(x_i - x_j)^2 = \delta$, we have that 
        \begin{align*}
            \left(1 - w_{ij}(\vx^{(\ell)})\right)\left(x_j^{(\ell)}- x_i^{(\ell)}\right)^2 \rightarrow \frac{\delta}{2}\;. 
        \end{align*}
        However, $\frac{1}{2\gamma^{(\ell)}}\rightarrow 0$. 
        For sufficiently large $\ell$, we thus have 
        \begin{align*}
            (1 - w_{ij}(\vx^{(\ell)}))(x_j^{(\ell)} - x_i^{(\ell)})^2 > \frac{1}{2\gamma^{(\ell)}}\;, 
        \end{align*}
        which implies that $\vx^{(\ell)}$ is linearly unstable by \Cref{prop:instability-condition}. 
    \end{proof}
        
    \begin{remark}
        In concert, \Cref{thm:HK-approx,prop:stable-sequence} yield the following results:
        \begin{itemize}
            \item The steady states of 
            $\mF_{\gamma}$ 
            that are bounded away from the manifolds $(x_i - x_j)^2 = \delta$ approximate steady states of the modified HK dynamics $\mF_{\infty}$ as $\gamma\rightarrow \infty$. 
            \item Steady states of $\mF_\gamma$ that lie on one of the aforementioned manifolds are always unstable for sufficiently large $\gamma$. 
            In particular, the linearly stable steady states of our SBCM 
            approximate steady states of the unmodified HK dynamics (which has the influence function \eqref{eq:HK-influence}) as $\gamma \rightarrow \infty$.
              \end{itemize}
        
           \end{remark}

\section{Dynamics of our SBCM on special graph topologies} \label{sec:specialcases}
    In \Cref{sec:limitingprops}, we discussed the dynamics of our SBCM in the small-$\gamma$ regime (in which our SBCM resembles continuous-time averaging behavior) and the $\gamma\rightarrow \infty$ limit (in which our SBCM's steady states are related to those of a modified HK model). 
       We now study the behavior of the transition between these two extremes by exploring how the steady states and their stabilities change as we vary the model parameters $\gamma$ and $\delta$.
    
    The identification and characterization of the steady states of our SBCM for arbitrary graph topologies is a daunting task, and unfortunately we do not have a complete characterization of these states. 
    Instead of approaching the problem in complete generality, we study the dynamics of our SBCM on some special graph topologies whose structure allows us to make progress. 
    In particular, we examine the path graph (see \Cref{sec:path-graph}) and graphs with balanced exposure to zealots (see \Cref{sec:balanced-exposure}). 
    
    Narrowing our focus to special graph topologies limits
    our scope. 
    However, we are able to apply our
        results to certain types
        of graphs that we build
        from subgraphs with well-understood dynamics. 
      In \Cref{lm:disconnected}, we demonstrate that the dynamics of disconnected persuadable subgraphs 
        evolve independently. 
        In \Cref{thm:core}, we prove that a subgraph
        without zealots that is connected to rest of a graph
        via exactly one edge to a node $i$ has the same steady-state opinion as that of node $i$.
    In concert, these results help us understand the dynamics of our SBCM on any graph that we can partition into more easily-studied subgraphs.

    \begin{lm}\label{lm:disconnected}
        Suppose that the persuadable subgraph $\cP$ of a graph $\cG$ consists of two disconnected components, whose node sets are $I$ and $J$. 
        Let $\vx_I$ be the opinion vector restricted to nodes in $I$; we define $\vx_J$ analogously. 
        Let $\mF_I$ be the components of $\mF$ restricted to nodes $I$; we define $\mF_J$ analogously.
        It is then the case that
        $\frac{d\vx_I}{dt} = \mF_I(\vx_I)$ and $\frac{d\vx_J}{dt} = \mF_J(\vx_J)$.
    \end{lm}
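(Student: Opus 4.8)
The plan is to show that, for each persuadable node $i \in I$, the component $f_i(\vx)$ defined in \eqref{eq:dynamics} depends on $\vx$ only through the opinions of nodes in $I$ together with the (fixed) zealot opinions; the symmetric statement for $J$ then follows immediately, and the two claimed identities are just the restrictions of the full system $\dot{\vx} = \mF(\vx)$ to the index sets $I$ and $J$.

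First I would fix $i \in I$. Since $i$ is persuadable, \eqref{eq:dynamics} gives $\frac{dx_i}{dt} = \frac{\sum_j w(x_i,x_j)(x_j - x_i)}{\sum_j w(x_i,x_j)}$, and by \eqref{eq:w_sigmoid} every summand with $i \not\sim j$ vanishes, so only the neighbors $j \sim i$ in $\cG$ contribute. Each such neighbor is either a zealot or a persuadable node. I would then argue that every \emph{persuadable} neighbor of $i$ lies in $I$: if $j \in \cP$ and $j \sim i$, then $(i,j)$ is an edge of the persuadable subgraph, so $i$ and $j$ lie in the same connected component of that subgraph; since $i \in I$, we get $j \in I$. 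In particular, no edge of $\cG$ joins a persuadable node of $I$ to a persuadable node of $J$.

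Consequently $f_i(\vx)$ is a function of $\{x_j : j \in I\}$ and $\{x_j : j \in \cZ\}$ alone. Because the zealot opinions are held constant by \eqref{eq:dynamics}, this means $f_i(\vx) = (\mF_I(\vx_I))_i$, where $\mF_I$ is exactly the vector field obtained by treating the zealot opinions as fixed parameters and restricting $\mF$ to the coordinates indexed by $I$. Collecting these identities over all $i \in I$ yields $\frac{d\vx_I}{dt} = \mF_I(\vx_I)$, and repeating the argument with the roles of $I$ and $J$ exchanged gives $\frac{d\vx_J}{dt} = \mF_J(\vx_J)$.

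There is essentially no hard step here: the entire content is the observation in the second paragraph that a persuadable neighbor of a node in $I$ must again lie in $I$, which is immediate from the definition of $I$ and $J$ as the node sets of the two connected components of the persuadable subgraph. The only point that deserves an explicit remark is the status of zealots — a persuadable node in $I$ may well be adjacent to a zealot — so that $\mF_I$ should be understood as carrying the (constant) zealot opinions along as fixed data; since those opinions never evolve, this does not affect the decoupling of the $I$- and $J$-dynamics.
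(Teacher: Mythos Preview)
Your proposal is correct and follows essentially the same approach as the paper's proof, which simply notes that $\mF_I$ contains no terms involving $\vx_J$ because there is no edge between nodes in $I$ and nodes in $J$. Your version is in fact more careful than the paper's, since you explicitly address the role of zealots (a persuadable node in $I$ may be adjacent to zealots, whose fixed opinions enter $\mF_I$ as constant parameters), a point the paper's one-line argument leaves implicit.
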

    \begin{proof}
        We write 
        \begin{align}
            \frac{d\vx_I}{dt} = \mF_I(\vx) = \mF_I(\vx_I)\,,
        \end{align}
        where the second equality follows from the fact that $\mF_I$ has no nonzero terms with elements of $\vx_J$ because there is no edge between nodes in $I$ and nodes in $J$. 
        The same argument holds for $\mF_J$. 
    \end{proof}
    
    \begin{thm}\label{thm:core}
        Suppose that a graph $\cG$ is connected and that one can partition it into node sets $I$ and $J$ that satisfy the following properties: 
        \begin{enumerate}
            \item The set $J \subseteq \cP$. (That is, $J$ has no zealots.) 
            \item Any path from a node $j \in J$ to a zealot traverses
            the same node $i \in I$. 
        \end{enumerate}
        At a steady state of our SBCM, we then have that $x_j = x_i$ for all $j \in J$.  
    \end{thm}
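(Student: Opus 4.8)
The plan is to read the statement as a discrete maximum principle. At a steady state, the equation $f_j(\vx)=0$ for a persuadable node $j$ says precisely that $x_j=\sum_{k\sim j}\frac{w_{jk}}{s_j}\,x_k$ is a convex combination of its neighbors' opinions, and because the influence function \eqref{eq:w_sigmoid} satisfies $w_{jk}>0$ for every adjacent pair, this is a \emph{strict} convex combination. So the opinion vector is discretely harmonic (with respect to the weights $w_{jk}$) at every node of $J$, and the conclusion $x_j=x_i$ is the assertion that such a harmonic function on the ``island'' $J$, whose only link to the boundary data is the single node $i$, must be constant and equal to $x_i$.

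The one technical point is that the maximum principle cannot be run on $J\cup\{i\}$ directly, because a node of $J$ that attains the maximum might have a neighbor in $I\setminus\{i\}$, about which the averaging identity says nothing. To fix this I would enlarge $J$: let $K$ be the union of $\{i\}$ with all connected components of $\cG\setminus\{i\}$ that meet $J$. I would then verify four properties of $K$: (i) $J\cup\{i\}\subseteq K$; (ii) $K\setminus\{i\}\subseteq\cP$, because a component of $\cG\setminus\{i\}$ meeting $J$ and containing a zealot would produce a $J$-to-zealot path avoiding $i$, against hypothesis~2; (iii) every neighbor in $\cG$ of a node of $K\setminus\{i\}$ again lies in $K$, which is immediate from the notion of connected component of $\cG\setminus\{i\}$; and (iv) $\cG[K]$ is connected, because each component appearing in $K$ must have an edge to $i$ --- otherwise, since $\cG$ is connected and contains a zealot lying outside that component, the component would be a connected component of all of $\cG$, which is impossible.

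With $K$ fixed, I would finish by the usual two-sided argument. Let $M=\max_{u\in K}x_u$ at the steady state and suppose $M>x_i$; then $A=\{u\in K:x_u=M\}$ is nonempty and omits $i$, so $A\subseteq K\setminus\{i\}\subseteq\cP$, and for any $u\in A$, expressing $x_u=M$ as a strict convex combination of neighbor values (all of which lie in $K$ by (iii) and are $\le M$) forces every neighbor of $u$ to lie in $A$. Hence $A$ is a nonempty, neighbor-closed subset of the connected graph $\cG[K]$, so $A=K$, contradicting $i\notin A$. Therefore $\max_{u\in K}x_u=x_i$, and symmetrically $\min_{u\in K}x_u=x_i$, so $x_u=x_i$ for all $u\in K\supseteq J$. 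The main obstacle, then, is really the bookkeeping around $K$ in steps (ii)--(iv): that is where connectedness of $\cG$ and the standing assumption that $\cG$ has at least one zealot get used (without a zealot, polarized steady states give counterexamples). I would also note that nothing in the argument cares whether $i$ is persuadable or a zealot --- only that $x_i$ is a fixed boundary value --- so the case $i\in\cZ$ needs no separate treatment.
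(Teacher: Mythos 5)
Your proof is correct, and at its core it runs on the same engine as the paper's: a discrete maximum principle applied to the steady-state identity that each persuadable node's opinion is a strict convex combination of its neighbors' opinions (strict because $w_{jk}>0$ for every adjacent pair at finite $\gamma$). The difference is in the bookkeeping, and it is a genuine improvement. The paper's argument works directly on $J$: it takes the set of maximizers of $\vx_J$, finds a maximizer adjacent to a non-maximizer in $J$, and writes $f_k$ as a sum over neighbors in $J$ only; it then handles the boundary by evaluating $f_j$ at a node adjacent to $i$ and keeping only the terms $w_{ij}(x_i-x_j)$ and $\sum_{k\in J}w_{jk}(x_k-x_j)$. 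Both expressions silently assume that every neighbor of a node of $J$ lies in $J\cup\{i\}$, which does not follow from hypothesis~2 alone: a node of $J$ may be adjacent to a persuadable node of $I\setminus\{i\}$ whose own routes to the zealots all pass through $i$. Your enlargement of $J$ to $K$ (the union of $\{i\}$ with the components of $\cG\setminus\{i\}$ meeting $J$), together with the checks that $K\setminus\{i\}$ is zealot-free and neighbor-closed and that $\cG[K]$ is connected, closes exactly this gap, and your single two-sided maximum principle on $K$ then delivers both of the paper's steps (constancy on $J$ and equality with $x_i$) at once. Two small quibbles: in step (iv) you do not actually need the zealot, since connectedness of $\cG$ together with $i\notin C$ already forbids $C$ from being a component of all of $\cG$; and the parenthetical claim that zealot-free graphs supply polarized counterexamples is false at finite $\gamma$, because $w>0$ everywhere forces every steady state on a connected zealot-free graph to be a consensus state by the very maximum principle you are using.
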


    \begin{proof}
        We first show that $\vx_{J} = c\vOnes$ at a steady state of equation \eqref{eq:dynamics} for some constant $c$.
        We proceed by contradiction.
                To do this, we first suppose that $\vx_{J} \neq c\vOnes$ for any $c$. 
        We then have that $\vx_{J}$ has a largest entry (which need not be unique), which attains the value $\hat{x}$.
        Let $K$ be the set of nodes on which $\vx_{J}$ 
        attains the 
        value $\hat{x}$. 
        If $K = J$, then we may set $c = \hat{x}$. 
        If $K \neq J$, by the hypothesis that the graph $\cG$ is connected, there exist
        $k \in K$ and $\ell \in J \setminus K$ such that $k \sim \ell$. 
        We then write the right-hand side of equation \eqref{eq:dynamics} for node $k$ as 
        \begin{align}
            f_k(\vx) = \frac{1}{s_k}\sqbracket*{\sum_{k' \in K} w_{kk'}(x_{k'} - x_k) + \sum_{\ell \in J\setminus K} w_{k\ell}(x_{\ell} - x_k)}  \,. 
        \end{align}
        At \map{a} steady state, $f_k(\vx) = 0$. 
        However, this is impossible. 
        The first sum (which is over $K$) vanishes by construction and the second sum (which is over $J \setminus K$) is strictly negative. 
        Therefore, at steady state, there exists a constant $c$ such that $\vx_{J} = c\vOnes$. 

        We now argue that $c = x_i$. 
        The derivative of the opinion of node $j$ is 
                \begin{align}
            f_j(\vx) = \frac{1}{s_j}\sqbracket*{ w_{ij}(x_i - x_j) + \sum_{k \in J}w_{jk}(x_k - x_j)}\;. 
        \end{align}
        At steady state, the second term vanishes by our argument above. 
        Therefore, we 
        infer that 
        $x_i = x_j = c$. 
        This completes the proof. 
    \end{proof}

In \Cref{fig:diagram-subgraph}, we show an example to
    demonstrate the usefulness
        of \Cref{lm:disconnected} and \Cref{thm:core}.
    They allow us to completely characterize
    the dynamics of our SBCM on a graph with a somewhat complicated structure by decomposing it
    into smaller subgraphs with dynamics that are easier to understand. 
    We discuss two such situations in the next two subsections.

    \begin{figure}
        \centering
        \includegraphics[width=0.6\textwidth]{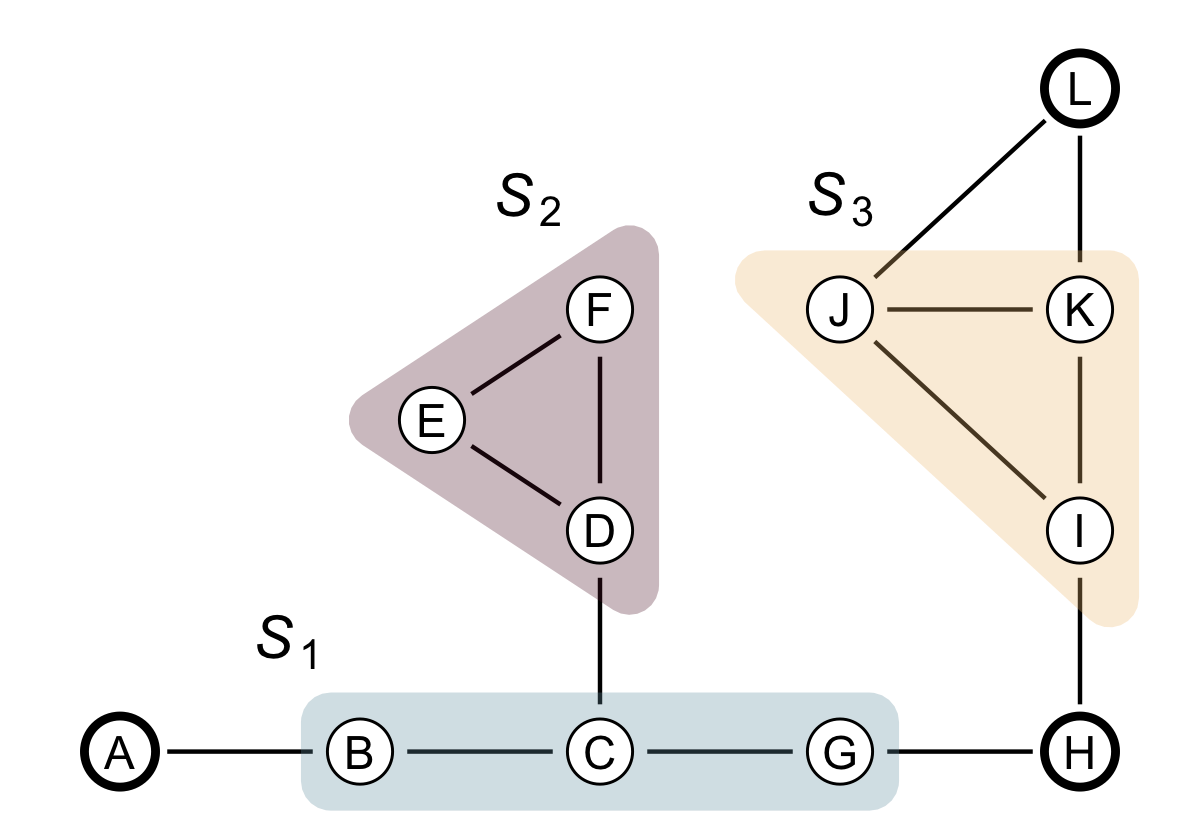}
        \caption{Illustration of \Cref{lm:disconnected} and \Cref{thm:core} 
            for a small graph with nine persuadable nodes and three zealots, which we highlight by thick borders.  
            The sets $S_1$ and $S_3$ are disconnected in the persuadable subgraph, and 
            they thus evolve independently of each other (by \Cref{lm:disconnected}) in our SBCM. 
            The nodes in the set $S_2$ satisfy the conditions on $J$ in \Cref{thm:core}. At steady state, $x_D = x_E = x_F = x_C$. 
        }
        \label{fig:diagram-subgraph}
    \end{figure}

    \subsection{Path graphs} \label{sec:path-graph}

    Let $P_n$ be a path graph with $n+2$ nodes, and suppose
    that the two extreme nodes of $P_n$ are zealots.
            The first zealot has index $1$ and opinion $x_1 = -(n+1)/2$.
    The opposing zealot has index $n+2$ and opinion $x_{n+2} = (n+1)/2$. 
    There are $n$ persuadable nodes, which are arranged in a path between the two symmetrically-placed zealots. 
    These persuadable nodes have indices $2, \ldots, n + 1$.

    When $\gamma = 0$, the {harmonic state} $\bar{\vx}$, which has components $\bar{x}_j = \frac{-(n+1)}{2} + j$, is the unique steady state of our SBCM;
    this state is linearly stable. 
        A direct calculation shows that $\bar{\vx}$ is a steady state of equation \eqref{eq:dynamics}
        on $P_n$ for any value of
        $\gamma$ but that its linear stability depends both on $\gamma$ and on $\delta$. 
        We seek to determine conditions that govern this dependence.

    \begin{lm}\label{lm:path-graph-eigs}
        Let 
        \begin{align} \label{eq:g}
            g(\gamma) \triangleq 2\gamma(1-v) - 1 > 0 \;,
        \end{align}
        where $v \triangleq \omega(1)$. 
        The harmonic state $\bar{\vx}$ of our SBCM on the path graph $P_n$ is linearly stable if and only if $g(\gamma) < 0$ and is linearly unstable if and only if $g(\gamma) > 0$.
    \end{lm}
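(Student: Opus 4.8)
The plan is to compute the Jacobian $\mJ_\cP$ at the harmonic state explicitly and to recognize it as a scalar multiple of a classical tridiagonal matrix whose spectrum is known and sign-definite. The crucial observation is that at $\bar{\vx}$ every pair of adjacent nodes differs by exactly one unit in opinion space: $\abs{\bar{x}_i - \bar{x}_j} = 1$ for all $i \sim j$ on $P_n$ (since $\bar{x}_j = -(n+1)/2 + j$ and the zealot opinions are the endpoints of this arithmetic progression). Hence $w_{ij} = \omega(1) = v$ on every edge, and since each persuadable node of $P_n$ has degree $2$, the strengths satisfy $s_i = 2v$ for all $i \in \cP$. Substituting these values into the off-diagonal expression $\partial f_i/\partial x_j = \frac{w_{ij}}{s_i}\,[\,1 - 2\gamma(1-w_{ij})(x_j - x_i)^2\,]$ gives $\partial f_i/\partial x_j = \tfrac12\big(1 - 2\gamma(1-v)\big) = -\tfrac12 g(\gamma)$ for every persuadable--persuadable edge, and then the diagonal identity $\partial f_i/\partial x_i = -\sum_{j\sim i}\partial f_i/\partial x_j$ from equation \eqref{eq:J_diag} yields $\partial f_i/\partial x_i = g(\gamma)$, because each persuadable node has exactly two neighbors and zealot edges still contribute to this diagonal sum (even though zealot columns do not appear in $\mJ_\cP$). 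Therefore $\mJ_\cP = \tfrac12 g(\gamma)\, T_n$, where $T_n$ is the $n\times n$ tridiagonal matrix with $2$ on the main diagonal and $-1$ on the first sub- and super-diagonals.

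Next I would invoke the well-known spectrum of $T_n$, the Dirichlet Laplacian of a path on $n$ vertices: its eigenvalues are $\mu_k = 2 - 2\cos\frac{k\pi}{n+1} = 4\sin^2\frac{k\pi}{2(n+1)}$ for $k = 1,\dots,n$, and each $\mu_k$ is strictly positive. (The degenerate case $n = 1$, in which the single persuadable node borders two zealots, gives $\mJ_\cP = (g(\gamma))$ and $T_1 = (2)$, which is consistent.) Hence the eigenvalues of $\mJ_\cP$ are precisely $\{\tfrac12 g(\gamma)\mu_k\}_{k=1}^{n}$. Since $\mJ$ contributes only additional zero eigenvalues (for the frozen zealots), the linear stability of $\bar{\vx}$ is governed entirely by the sign of $g(\gamma)$.

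Finally, I would read off the signs. If $g(\gamma) < 0$, every eigenvalue $\tfrac12 g(\gamma)\mu_k$ is strictly negative, so $\mJ_\cP$ is negative definite and $\bar{\vx}$ is linearly stable; if $g(\gamma) > 0$, every such eigenvalue is strictly positive, so $\mJ$ has a positive eigenvalue and $\bar{\vx}$ is linearly unstable by the criterion recalled after \Cref{prop:instability-condition}. Because the conditions $g(\gamma) < 0$ and $g(\gamma) > 0$ are mutually exclusive and exhaust all $\gamma$ with $g(\gamma) \neq 0$, both biconditionals in the statement follow. I do not anticipate a genuine obstacle here; the two points deserving a little care are (i) confirming that the persuadable nodes adjacent to a zealot still contribute the diagonal value $g(\gamma)$, which is immediate from the degree-$2$ count together with the structure of \eqref{eq:J_diag}, and (ii) quoting the spectrum of the path Dirichlet Laplacian correctly.
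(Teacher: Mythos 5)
Your proposal is correct and follows essentially the same route as the paper: both identify that $w_{ij}=v$ on every edge at the harmonic state, reduce the relevant matrix to a scalar multiple of the tridiagonal Toeplitz matrix $T_n$ (you scale $\mJ_\cP=\tfrac12 g(\gamma)T_n$ directly, while the paper works with $\mM_\cP=vg(\gamma)T_n$, which differ only by the positive factor $\mS_\cP^{-1}=\tfrac{1}{2v}\mI$), and then read off stability from the known positive spectrum of $T_n$. Your handling of the zealot-adjacent diagonal entries and of the sign of $g(\gamma)$ matches the paper's conclusion exactly.
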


    \begin{proof}
        As in our prior arguments, it suffices to consider the matrix $\mM_{\cP}$, rather than the full Jacobian matrix. 
        For this graph topology, the matrix $\mM_{\cP}$ has a simple form. 
        At the harmonic state $\bar{\vx}$, we have
                $(\bar{x}_j - \bar{x}_i)^2 = 1$ for all $i \sim j$, so
                $w(\bar{x}_i, \bar{x}_j) = \omega(1) = v$. 
        Let $s = vg(\gamma)$. 
        The matrix 
                                                                                                                                           \begin{align}
            \mM_\cP = s\begin{bmatrix}
                \phantom{-}2 & -1  & \phantom{-}0 & \cdots & \phantom{-}0 & \phantom{-}0 & \phantom{-}0  \\ 
                -1 & \phantom{-}2  & -1 & \cdots & \phantom{-}0 & \phantom{-}0 & \phantom{-}0 \\ 
                \phantom{-}0 & -1  & \phantom{-}2  & \ddots & \phantom{-}0 & \phantom{-}0 & \phantom{-}0 \\ 
                \phantom{-}\vdots & \phantom{-}\vdots & \ddots & \ddots & \ddots & \phantom{-}\vdots & \phantom{-}\vdots \\
                \phantom{-}0&  \phantom{-}0 &  \phantom{-}0 & \ddots & \phantom{-}2 & -1 & \phantom{-}0 \\ 
                \phantom{-}0&  \phantom{-}0 &  \phantom{-}0 & \cdots & -1 & \phantom{-}2 & -1 \\ 
                \phantom{-}0&  \phantom{-}0 &  \phantom{-}0 & \cdots & \phantom{-}0 & -1 & \phantom{-}2
            \end{bmatrix}
        \end{align}
        is both tridiagonal and Toeplitz. 
                We use known results for this type
        of matrix \cite{noschese2013tridiagonal} to explicitly write down the largest eigenvalue $\lambda$ of $\mM_\cP$. This eigenvalue is
                \begin{align}
            \lambda = 2s\sqbracket*{1 - \cos\paren*{\frac{\pi}{n+1}} }\,. 
        \end{align}
        The factor inside brackets is always positive, so the sign of $\lambda$ 
                matches the sign of $s$. 
        Because $v >0$, the sign of $s$ 
                matches the sign of $g(\gamma)$. 
        It follows that $\lambda < 0$ if and only if $g(\gamma) < 0$ and that $\lambda > 0$ if and only if $g(\gamma) > 0$. 
        This completes the proof.
    \end{proof}
    
      \begin{remark}

                For our SBCM on the path graph $P_n$ with symmetrically-placed zealots, \Cref{lm:path-graph-eigs} states
                        that the condition \eqref{eq:instability-condition} is both necessary and sufficient to ensure that $\bar{\vx}$ is linearly stable. 
    \end{remark}
    
    The following result gives the qualitative dependence of the linear stability of $\bar{\vx}$ on $\gamma$. 
    This dependence is controlled by the value of $\delta$. 
      Let $y$ be the unique negative real solution of the equation
      \begin{align}
    \label{eq:y-def}
        e^{y - 2} = \frac{y^2}{4}\;. 
    \end{align}
    Numerically, $y \approx -0.31$.

    \begin{thm}\label{thm:path-graph}
    For our SBCM on the 
            path graph $P_n$ with symmetrically-placed zealots, 
            the following statements hold:
    \begin{enumerate}
        \item If $\delta \in [0, 1]$, there exists a value $\gamma_c > 0$ such that the harmonic state $\bar{\vx}$ is linearly stable for all $\gamma \in [0, \gamma_c)$ and is linearly unstable for all $\gamma > \gamma_c$. Additionally, $\gamma_c = 1$ when $\delta = 1$.
              \item If $\delta \in (1, 1 - y)$, there exist $\gamma_1, \gamma_2 > 0$ such that the harmonic state is linearly unstable if and only if $\gamma \in (\gamma_1, \gamma_2)$ and is linearly stable if either $\gamma < \gamma_1$ or $\gamma > \gamma_2$.
        \item If $\delta > 1 - y$, the harmonic state $\bar{\vx}$ is linearly stable for all $\gamma > 0$.  
    \end{enumerate}
    \end{thm}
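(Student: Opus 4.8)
The plan is to push the entire problem through the scalar quantity that controls the sign of the top eigenvalue in \Cref{lm:path-graph-eigs}. By that lemma, $\bar{\vx}$ is linearly stable exactly when $g(\gamma) < 0$ and linearly unstable exactly when $g(\gamma) > 0$, where $g(\gamma) = 2\gamma(1-v) - 1$ and, since at the harmonic state adjacent persuadable nodes differ by $1$ in opinion, $v = \omega(1) = \bigl(1 + e^{\gamma(1-\delta)}\bigr)^{-1}$. Setting $\phi(\gamma) \triangleq 2\gamma(1-v) = g(\gamma)+1 = \dfrac{2\gamma e^{\gamma(1-\delta)}}{1 + e^{\gamma(1-\delta)}}$, each of the three cases reduces to describing, for fixed $\delta$, the set $\{\gamma > 0 : \phi(\gamma) > 1\}$; note once and for all that $\phi(0) = 0$ and $\phi(\gamma) > 0$ for $\gamma > 0$.

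For $\delta \in [0,1]$ I would argue monotonically: when $1-\delta \geq 0$ the factor $1 - v = e^{\gamma(1-\delta)}/(1+e^{\gamma(1-\delta)})$ is strictly positive and nondecreasing in $\gamma$, so a one-line product rule gives $\phi' \geq 2(1-v) > 0$ on $(0,\infty)$; moreover $1 - v \geq \tfrac12$ there, so $\phi(\gamma) \geq \gamma \to \infty$. Hence $\phi$ is continuous, strictly increasing from $0$ to $\infty$, and the intermediate value theorem produces a unique $\gamma_c > 0$ with $\phi < 1$ on $[0,\gamma_c)$ and $\phi > 1$ on $(\gamma_c,\infty)$, which is case (1). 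Specializing to $\delta = 1$, the exponent vanishes and $\phi(\gamma) = \gamma$, so $\gamma_c = 1$.

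For $\delta > 1$ put $\beta \triangleq \delta - 1 > 0$, so $\phi(\gamma) = \dfrac{2\gamma}{e^{\beta\gamma}+1}$; then $\phi(0) = 0$ and $\phi(\gamma) \to 0$ as $\gamma\to\infty$, so $\phi$ has an interior maximum. The crucial structural claim is that $\phi$ is \emph{unimodal}. Up to a positive factor, the numerator of $\phi'$ is $N(\gamma) = e^{\beta\gamma}(1-\beta\gamma) + 1$; substituting $s = \beta\gamma$ gives $\tilde N(s) = e^s(1-s) + 1$ with $\tilde N(0) = 2 > 0$ and $\tilde N'(s) = -s e^s < 0$ on $(0,\infty)$, so $\tilde N$ has exactly one zero $s_0$, lying in $(1,\infty)$ since $\tilde N(1) = 1 > 0$, and characterized by $e^{s_0}(s_0-1) = 1$. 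Therefore $\phi$ strictly increases on $(0, s_0/\beta)$ and strictly decreases afterwards. Evaluating at the maximizer and using $e^{s_0} = (s_0-1)^{-1}$, hence $e^{s_0}+1 = s_0/(s_0-1)$, gives the closed form $\phi_{\max} = 2(s_0-1)/\beta$.

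Finally, because $s_0$ is an absolute constant with $s_0 > 1$, $\phi_{\max} = 2(s_0-1)/\beta$ is strictly decreasing in $\beta$ (hence in $\delta$) and equals $1$ exactly when $\beta = 2(s_0-1)$, i.e. $\delta = \delta^\ast \triangleq 2s_0 - 1$. To identify $\delta^\ast$ with $1-y$, set $y \triangleq 1 - \delta^\ast = -2(s_0-1)$, so $s_0 = 1 - y/2$; inserting this into $e^{s_0}(s_0-1) = 1$ gives $e^{1-y/2}(-y/2) = 1$, and squaring yields $e^{2-y} = 4/y^2$, i.e. $e^{y-2} = y^2/4$, which is \eqref{eq:y-def}; since $s_0 > 1$ we have $y < 0$, and $y \mapsto e^{y-2} - y^2/4$ is strictly increasing on $(-\infty,0)$, so $y$ is its unique negative root and $\delta^\ast = 1-y$. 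Then: if $1 < \delta < 1-y$ we have $\phi_{\max} > 1$, and by unimodality $\{\gamma>0 : \phi(\gamma)>1\} = (\gamma_1,\gamma_2)$ with $0 < \gamma_1 < s_0/\beta < \gamma_2 < \infty$ (both endpoints finite because $\phi(0)=0<1$ and $\phi(\infty)=0<1$), which is case (2); if $\delta > 1-y$ then $\phi_{\max} < 1$, so $\phi < 1$ on all of $(0,\infty)$ and $\bar{\vx}$ is linearly stable for every $\gamma$, which is case (3). I expect the main obstacle to be the unimodality argument for $\delta > 1$ together with the algebraic bookkeeping that eliminates $s_0$ in favor of the transcendental equation \eqref{eq:y-def}; once $\phi$ is known to be unimodal with the explicit maximum $2(s_0-1)/\beta$, the trichotomy is immediate.
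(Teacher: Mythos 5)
Your proof is correct. It rests on the same reduction as the paper's --- \Cref{lm:path-graph-eigs} turns everything into a sign analysis of a single scalar function of $\gamma$ --- but the function you analyze and the mechanism differ. The paper divides $g$ by $v-1$ to obtain $h(\gamma) = 1 + e^{-\gamma(1-\delta)} - 2\gamma$, which is strictly convex in $\gamma$; convexity immediately caps the number of roots at two, and the trichotomy follows from the sign of $h$ at its explicit minimizer. You instead keep $\phi(\gamma) = g(\gamma)+1 = 2\gamma/(e^{\beta\gamma}+1)$ (for $\delta>1$) and prove unimodality by hand via the sign analysis of $\tilde N(s) = e^s(1-s)+1$, which is slightly more work but pays off with the clean closed form $\phi_{\max} = 2(s_0-1)/\beta$: the monotone dependence of the maximum on $\delta$ and the identification of the threshold $\delta^\ast = 1-y$ with equation \eqref{eq:y-def} become transparent, whereas the paper has to manipulate the value $h(\bar{\gamma})$ into the inequality $(1-\delta)^2/4 < e^{-1-\delta}$. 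Your Case 1 argument (monotonicity of $\phi$ when $\delta \le 1$, with the lower bound $\phi(\gamma)\ge\gamma$) is a correct and somewhat more elementary replacement for the paper's convexity-plus-parity argument for uniqueness of $\gamma_c$. One byproduct worth recording: your parametrization gives $y = -2(s_0-1)$ with $e^{s_0}(s_0-1)=1$, hence $s_0 \approx 1.2785$ and $y \approx -0.557$; the value $y \approx -0.31$ quoted in the paper just before the theorem does not satisfy $e^{y-2}=y^2/4$ and appears to be a numerical slip (so the window in Case 2 is $\delta \in (1, \approx 1.557)$), though this affects neither proof.
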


    \begin{proof}
        We proceed by examining the equation $g(\gamma) = 0$. 
        Dividing both sides of the equation by $v-1$ gives the equivalent equation $h(\gamma) = 0$, where 
        \begin{align*}
            h(\gamma) \triangleq \frac{1}{1-v} - 2\gamma = 1 + e^{-\gamma(1-\delta)} - 2\gamma\;.
        \end{align*}
        \Cref{lm:path-graph-eigs} implies that the harmonic state $\bar{\vx}$ is linearly stable on $P_n$ if and only if $h(\gamma) > 0$ and is linearly unstable if and only if $h(\gamma) < 0$. 
        We note two features of the function $h$. 
        First, $h$ is strictly convex as a function of $\gamma$ when $\delta \neq 1$, so the equation $h(\gamma) = 0$ must have either zero, one, or two solutions.
        Second, $h(0) = 2$. 
        
        \medskip

            \textbf{Case 1.} Suppose that $\delta \in [0,1)$. 
                        As $\gamma \rightarrow \infty$, the function $h(\gamma)$ does not have a lower bound.
            Because $h$ is continuous, the equation $h(\gamma) = 0$ must have at least one solution.
            Additionally, because $h$ is strictly convex, this equation can have at most two solutions. 
            However, because $h(0) = 2$ and $h(\gamma)$ does not have a lower bound, it
                        must cross the horizontal axis an odd number of times. 
            We conclude that $h(\gamma) = 0$ has a unique positive real solution, 
                        which we denote by $\gamma_c$. 
            Finally, if $\delta = 1$, then $h(\gamma) = 2 - 2\gamma$ and a direct computation gives the unique solution $\gamma_c = 1$. This completes the proof of this case.  

       \medskip

            \textbf{Case 2.} 
            Because $h$ is strictly convex, the condition
                         $\frac{\partial h(\bar{\gamma})}{\partial \gamma} = 0$ is both necessary and sufficient
                          for $\bar{\gamma}$ to be 
                          the unique global minimizer of $h(\gamma)$.
            When $\delta > 1$, the solution of  $\frac{\partial h(\bar{\gamma})}{\partial \gamma} = 0$ is
            \begin{align*}
                \bar{\gamma} = \frac{\ln 2 - \ln(\delta - 1)}{\delta - 1}\;. 
            \end{align*} 
                        Because $y \approx -0.31$, the condition $\delta \in (1, 1-y)$ ensures that $\bar{\gamma} > 0$. 
            The minimum of $h$ is then 
            \begin{align*}
                h(\bar{\gamma}) = 1 + \left[2 \times \frac{1 - \ln 2 + \ln(\delta - 1)}{\delta - 1}\right] \;. 
            \end{align*}
            We now check the sign of $h(\bar{\gamma})$. 
            The condition $h(\bar{\gamma}) < 0$ yields
                        \begin{align*}
                \frac{(1-\delta)^2}{4} < e^{-1 - \delta}\;,
            \end{align*}
            which has a solution if and only if $\delta < 1 - y$. 
            Therefore, provided that $1 < \delta < 1-y$, the minimum value $h(\bar{\gamma})$ is negative. 
            The strict convexity of $h$ implies that 
                         $h(\gamma) = 0$ has precisely two solutions, which we denote by $\gamma_1$ and $\gamma_2$. Both of these solutions are positive because $h(0) > 0$ and $h(\bar{\gamma}) < 0$.
            For $\gamma \in (\gamma_1, \gamma_2)$, we have $h(\gamma) < 0$ and the harmonic state $\bar{\vx}$ is linearly unstable. For $\gamma < \gamma_1 $ or $\gamma > \gamma_2$, we have $h(\bar{\gamma}) > 0$ and the harmonic state
                        $\bar{\vx}$ is linearly stable.  
            This completes the proof of this case.

       \medskip

        \textbf{Case 3.} If $\delta > 1 - y$, it follows that $h(\bar{\gamma}) > 0$. 
        Because $\bar{\gamma}$ is the global minimizer of $h$, we infer that $h(\gamma) > 0$ for all $\gamma$. 
        This implies that the harmonic state $\bar{\vx}$ is linearly stable for all $\gamma$. This completes the proof.
    \end{proof}

    To study the structure of steady states of equation \eqref{eq:dynamics}
        other than the harmonic state $\bar{\vx}$, we consider two one-dimensional families of states.
    For simplicity, we assume that $n$ (i.e., the number of nodes of $\cG$) is even for each of these families. 
    Both families of states have the form 
    \begin{align} \label{eq:path-graph-1d-family}
        \vx_\theta = (1 - \theta)\bar{\vx}  + \theta\frac{n+1}{2}\vv\;, 
    \end{align}
    where $\vv$ is an opinion vector and $\theta \in [0,1]$. We classify the families based on the structure of $\vv$. 
    Broadly speaking, the first family consists of ``polarization-like'' states; the parameter $\theta$ interpolates between the harmonic state and a symmetrically polarized state. 
    The second family consists of ``consensus-like'' states; the parameter $\theta$ interpolates between the harmonic state and consensus. 
    Inserting either of these families into equation \eqref{eq:dynamics} reduces the dynamics to a single dimension.
    
    In the first family of states,
        which we illustrate in \Cref{fig:path-graph}(a), $\vv$ is the vector $(-1, \ldots, -1, 1, \ldots, 1)$; there are $n/2 + 1$ copies each of the values $-1$ and $1$. 
        The condition $f_i(\vx_\theta) = 0$ is always satisfied for $i \in \{ 1,\ldots,n/2 - 1\}$ and $i \in \{ n/2 + 2,\ldots, n\}$. 
    For $i = n/2$ and $i = n/2 + 1$, the associated conditions $f_i(\vx_\theta) = 0$ 
        are the same,
                so we consider only the former. 
                           Let $m = n/2$. 
        Inserting \eqref{eq:path-graph-1d-family} into \eqref{eq:dynamics} yields 
        \begin{align}
        \frac{dx_m}{dt} = f_{m}(\vx_\theta) = \frac{(1-\theta )\omega(1 - \theta ) + (1 + \theta n)\omega(1 + \theta n)}{\omega(1-\theta ) + \omega(1 + \theta n)}\;,\label{eq:path-graph-1d-update}
    \end{align}
    which gives
        a steady state when
    \begin{align*}
        (1-\theta)\omega(1 - \theta) + (1 + \theta n)\omega(1 + \theta n) = 0\;.
    \end{align*}
    In \Cref{fig:path-graph}(d), we show the number of linearly stable steady states of the one-dimensional update equation \eqref{eq:path-graph-1d-update}.

    The dashed curve in \Cref{fig:path-graph}(d) is 
        $g(\gamma) = 0$. 
    As in \Cref{lm:path-graph-eigs,thm:path-graph}, this curve divides the $(\gamma,\delta)$ plane into regions in which the harmonic state
         is linearly stable and linearly unstable. 
    Below this curve, the harmonic state is linearly unstable. 
    If $\delta \leq 1$, a horizontal line 
        crosses the dashed line exactly once (at the value $\gamma_c$), as described in Case 1 of \Cref{thm:path-graph}. 
    If $1 < \delta < 1-y$, a horizontal line 
        crosses the dashed boundary twice, giving a destabilization and subsequent restabilization of the harmonic state
        as 
        $\gamma$ increases, as described in Case 2 of \Cref{thm:path-graph}. 
    Finally, if $\delta > 1 - y$, a horizontal line never crosses the dashed boundary and the harmonic state
        is linearly stable for all $\gamma$, as described by Case 3 of \Cref{thm:path-graph}. 
    
            In \Cref{fig:path-graph}(d), we also see an additional steady state (with 
        $\theta > 0$), which is unstable when $\gamma$ is close to $0$, stabilizes as $\gamma$ increases, and then destabilizes as $\gamma$ increases further.
    For very small values of $\gamma$, the harmonic state is the only linearly stable steady state of the form \eqref{eq:path-graph-1d-family}.
    As $\gamma$ increases, a second linearly stable state emerges; for this state, $\theta > 0$.

    In \Cref{fig:path-graph}(b,e), we consider a second family of states, which share the form
        \eqref{eq:path-graph-1d-family}. 
    This time, the entries of the vector $\vv$ are 
    \begin{align}
        v_i = \begin{cases}
        0\,, &\quad i \in \cP \\ 
        \bar{x}_i\,, &\quad \text{otherwise\,.}
        \end{cases} \label{eq:path-graph-1d-family-2}
    \end{align}
    For a narrow range of values of $\gamma$, there is a linearly stable steady state with $\theta > 0$. (See the yellow region of \Cref{fig:path-graph}(e).)
       In this state, the persuadable nodes are in approximate consensus; they are influenced more by each other's opinions than by the zealots. 
    As $\gamma$ increases, this steady state destabilizes, and then the harmonic state is the only remaining linearly stable {steady state}. 
    As before, the harmonic state is linearly unstable
    below the dashed curve and linearly stable above it.

    By comparing \Cref{fig:path-graph}(d) and \Cref{fig:path-graph}(e), we observe that the region of stability for the large-$\theta$ state is larger for the polarized state 
        \eqref{eq:path-graph-1d-family} than it is for the state
        \eqref{eq:path-graph-1d-family-2}. 
    This is a simple, interpretable way in which the path-graph topology favors polarization over consensus. 
    
    The families of states 
        that we described above do not include not all linearly stable steady states. 
        In \Cref{fig:path-graph}(c), we show an example of another linearly stable
        state.

    \begin{figure}
        \centering 
        \includegraphics[width=1\textwidth]{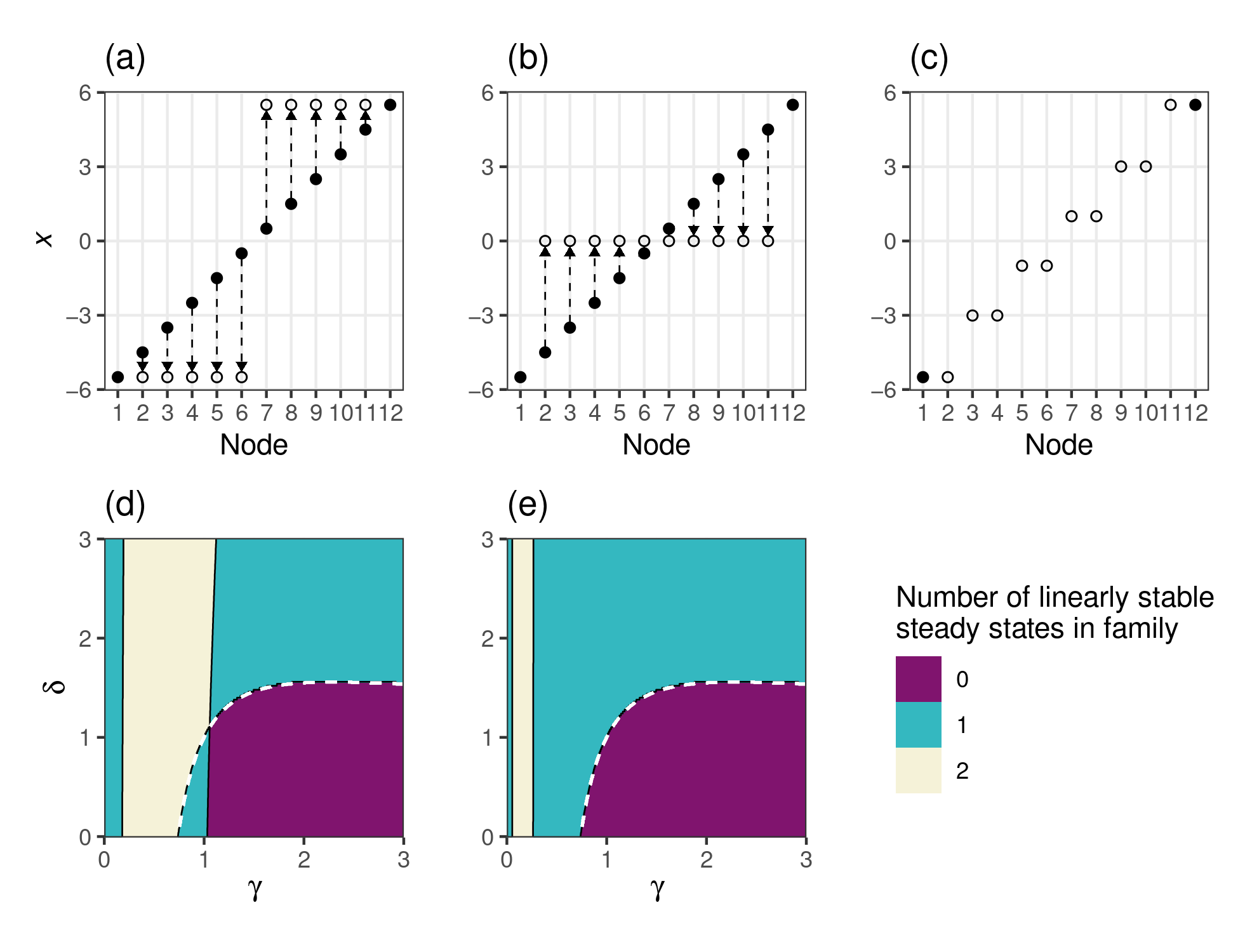}
        \caption{A pair of one-dimensional families of steady states of our SBCM in the 12-node path graph; these states take the form in
                 equation \eqref{eq:path-graph-1d-family}.
        (a) This family of steady states interpolates between the harmonic state (filled black disks) and a symmetrically polarized state (hollow disks), with $\vv = (-1,\ldots,-1,1,\ldots, 1)$; there are $n/2+1$ copies of each of the values $-1$ and $1$. 
        The dashed arrows correspond to changing the parameter $\theta$. 
        (b) This family of steady states interpolates between the harmonic state and a state in which all persuadable nodes are in consensus (hollow disks), with $\vv$ given by \eqref{eq:path-graph-1d-family-2}.        
                (c) An example of a steady state that is neither in the family in (a) nor in the family in (b). 
        We obtain this state numerically using the parameter values $\delta = 0.01$ and $\gamma = 60$. 
        (d) The number of linearly stable steady states in the family in (a) as a function of the parameters $\gamma$ and $\delta$. 
        (e) The number of linearly stable steady states in the family in (b) as a function of the parameters $\gamma$ and $\delta$.
                The dashed white curve is the curve $g(\gamma) = 0$, which (by \Cref{lm:path-graph-eigs}) separates the regions in which the harmonic state is linearly stable from those in which it is linearly unstable.}   
        \label{fig:path-graph}
    \end{figure}

    \subsection{Graphs with balanced exposure} 
    \label{sec:balanced-exposure}

    We now seek to leverage symmetry while moving beyond the particular structure that is imposed by a path graph. To explore this idea, we introduce the notion of \emph{balanced exposure.}  
    We say that a graph with two zealots satisfies the \emph{balanced-exposure} (BE) condition if no persuadable node is adjacent to exactly one zealot. 
    Throughout this subsection, we assume that the opposing zealots have opinions $-1$ and $1$.
    In a BE graph, it is possible to fully characterize the linear stability of the harmonic state.

    \begin{thm} \label{thm:BE-harmonic}
        Let the graph $\cG$ satisfy BE. The following statements hold\map{:}
        \begin{enumerate}
            \item For any $\gamma$, the harmonic state $\bar{\vx} = \vzero$ is a steady state of equation \eqref{eq:dynamics}.
            \item The harmonic state $\bar{\vx}$ is linearly stable if and only if $g(\gamma) > 0$. 
        \end{enumerate}
    \end{thm}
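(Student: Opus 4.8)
The plan is to prove the two assertions separately: the steady-state claim by a symmetry cancellation, and the stability claim by the definiteness reduction developed in \Cref{sec:basicresults}.

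For statement (1), I would verify directly that $\mF_\gamma(\vzero) = \vzero$ for every $\gamma$. Fix a persuadable node $i \in \cP$. Since there are exactly two zealots, the balanced-exposure hypothesis forces $i$ to be adjacent to \emph{both} zealots or to \emph{neither}. At $\vx = \vzero$ every persuadable neighbor $j$ of $i$ contributes $w_{ij}(x_j - x_i) = w_{ij}\cdot 0 = 0$ to the numerator of $f_i$ in \eqref{eq:dynamics}, so a node adjacent to neither zealot already has $f_i(\vzero)=0$. If instead $i$ is adjacent to both zealots, their opinions $-1$ and $+1$ lie at equal distance $1$ from $0$, so they carry the common weight $\omega(1)$ and contribute
\[
    \omega(1)\,(-1 - 0) + \omega(1)\,(1 - 0) = 0.
\]
Hence the numerator of $f_i(\vzero)$ vanishes for every $i \in \cP$, and $\bar{\vx} = \vzero$ is a steady state independently of $\gamma$.

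For statement (2), I would reuse the reduction of \Cref{sec:basicresults}: by \eqref{eq:matrix-jacobian} and the similarity of $\mJ_\cP$ to $\mS_\cP^{-1/2}\mM_\cP\mS_\cP^{-1/2}$, the harmonic state is linearly stable exactly when $\mZ_\cP + \mL_\cP$ (equivalently $-\mM_\cP$) is positive definite. Evaluating these matrices at $\vzero$ makes the two edge types decouple cleanly. Every persuadable--persuadable edge sits at opinion distance $0$, so its weight is $\omega(0)$ and the squared-distance factor in $r_{ij}$ drops out; consequently $\mL_\cP = \omega(0)\,\mL(\cG_\cP)$ is a positive multiple of the ordinary combinatorial Laplacian $\mL(\cG_\cP)$ of the persuadable subgraph, which is positive semidefinite with kernel spanned by $\vOnes$ when $\cG_\cP$ is connected (the general case following from \Cref{lm:disconnected}). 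Every persuadable--zealot edge sits at distance $1$, so its weight is $v = \omega(1)$, and each such term carries the factor $1 - 2\gamma(1 - v)$, which by the definition of $g$ in \eqref{eq:g} changes sign exactly at $g(\gamma) = 0$. Because balanced exposure makes every exposed persuadable node see \emph{both} zealots, the zealot contributions collapse into the single diagonal matrix $\mZ_\cP$, a scalar multiple of $g(\gamma)$ times the $0/1$ indicator diagonal $\mD_Z$ of the zealot-adjacent nodes.

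The final step is to read off positive definiteness of $\mZ_\cP + \mL_\cP = \omega(0)\,\mL(\cG_\cP) + \mZ_\cP$ from these two pieces. Since $\mL(\cG_\cP)$ is positive semidefinite and singular precisely along $\vOnes$, the quadratic form along $\vOnes$ is controlled solely by the zealot diagonal, i.e.\ by the sign of $g(\gamma)$; this fixes the stability threshold at $g(\gamma)=0$, independently of the graph topology. To pin down the stable side and to upgrade semidefiniteness to strict definiteness, I would combine a test against $\vOnes$ (which annihilates $\mL(\cG_\cP)$ and isolates the zealot term) with a Rayleigh-quotient argument: since $\cG_\cP$ is connected and at least one persuadable node is exposed, the only vector annihilating both $\mL(\cG_\cP)$ and $\mD_Z$ is $\vzero$, so $\mZ_\cP + \mL_\cP$ is strictly definite off the threshold. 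Carrying out this sign bookkeeping---the same spectral technique used for the tridiagonal matrix in \Cref{lm:path-graph-eigs}---yields that $\bar{\vx} = \vzero$ is linearly stable if and only if $g(\gamma) > 0$, as claimed. I expect this sign bookkeeping, together with the upgrade from semidefiniteness to strict definiteness via connectivity, to be the main obstacle, because the per-node criterion of \Cref{prop:instability-condition} is silent here: at $\vzero$ every persuadable node with a persuadable neighbor automatically satisfies $(1-w_{ij})(x_j-x_i)^2 = 0 < \tfrac{1}{2\gamma}$, so any loss of stability is a collective spectral effect and must be extracted from the full matrix $\mZ_\cP + \mL_\cP$ rather than from a single node.
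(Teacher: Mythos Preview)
Your plan is essentially the paper's proof: for (1) the same zealot-pair cancellation at distance $1$, and for (2) the same decomposition $\mM_\cP = -\omega(0)\,\mL(\cG_\cP) - v\bigl(1-2\gamma(1-v)\bigr)\mK$ followed by a Rayleigh-quotient test against $\vOnes$ together with the observation that $\mL(\cG_\cP)$ and $\mK$ cannot be simultaneously annihilated. One caution on the ``sign bookkeeping'' you correctly flagged as the crux: at $\bar{\vx}=\vzero$ one finds $\mZ_\cP = v\bigl(1-2\gamma(1-v)\bigr)\mK = -v\,g(\gamma)\,\mK$, so $\vOnes^T(\mZ_\cP+\mL_\cP)\vOnes>0$ exactly when $g(\gamma)<0$; carried through, your argument (like the paper's own computation) actually yields linear stability iff $g(\gamma)<0$, consistent with \Cref{lm:path-graph-eigs} and the Remark following the theorem --- the inequality direction in the statement is a typo.
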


    \begin{proof} 
        Let $u = \omega(0)$ and $v = \omega(1)$.
        Fix a node $i \in \cN$. 
        If $i$ is not adjacent to any
        zealots, then $f_i(\bar{\vx}) = 0$ because $\bar{x}_i = \bar{x}_j = 0$ for all $j \sim i$. 
        If $i$ is adjacent
        to two zealots, then
        \begin{align*}
            f_i(\bar{\vx}) =
                \frac{1}{d_iu + 2v} \paren*{(1-0)v + (0-1)v} = 0\;,
        \end{align*}
        where $d_i$ is the degree of node $i$. Therefore, for all $i$, we have that $f_i(\bar{\vx}) = 0$ and hence that the harmonic state $\bar{\vx} = \vzero$ is a steady state of equation \eqref{eq:dynamics}.

        We now study the linear stability of the steady state $\bar{\vx}$ by examining the spectrum of the matrix $\mM_\cP$. 
        Let $\mA$ be the adjacency matrix of the persuadable subgraph, let $\mD$ be the diagonal matrix of degrees in the persuadable subgraph, and let $\mK$ be the diagonal matrix of zealot-degrees (i.e., the number of zealots that are attached to each node). 
               With this notation, we write
        \begin{align}
            \mM_\cP &= u(\mA - \mD) - v(1-2\gamma(1-v))\mK \nonumber\\ 
                &= -u\mL - v(1-2\gamma(1-v))\mK\;, \label{eq:M}
        \end{align}
        where $\mL$ is the combinatorial graph Laplacian of $\cG$. 
        For any 
                unit vector $\vv$, we have 
        \begin{align*}
            \vv^T\mM_\cP\vv = -u\vv^T\mL\vv - v(1 - 2\gamma(1-v))\vv^T\mK\vv\;. 
        \end{align*}
        Because $u > 0$ and $\mL$ is 
                positive semidefinite, the first term is nonpositive and it is $0$ only if $\vv = \frac{1}{\sqrt{n}}\vOnes$.
        Furthermore, $\mK$ is diagonal with nonnegative entries, which implies that $\vv^T\mK\vv \geq 0$ and $\vOnes^T\mK\vOnes > 0$.  
        Suppose that $1 - 2\gamma(1-v)>0$. 
        It follows that $\mM_\cP$ is negative definite because it is not possible for $\vv^T\mL\vv$ and $\vv^T\mK\vv$ to vanish simultaneously. 
        Suppose instead that $1 - 2\gamma(1-v)\leq 0$. 
        We then have that $\vOnes^T\mM_\cP\vOnes \geq 0$, so $\mJ$ has a nonnegative eigenvalue. 
        This completes the proof of the second statement of the theorem.     
    \end{proof}
    
    \begin{remark}
        Because the second statement of \Cref{thm:BE-harmonic} is the same linear-stability criterion as in \Cref{thm:path-graph}, the conclusions of \Cref{thm:path-graph} about the linear stability of the {harmonic state} hold for BE graphs. 
    \end{remark}

    One natural intuition for 
        opinion dynamics on a graph is that the stability of a state or the amplification of a perturbation depends on its relationship to the topology of the 
        graph. 
        For example, consider a perturbation $\vv$ in which $n/2$ entries have value $+1$ and $n/2$ entries have value $-1$. 
    This perturbation splits persuadable nodes into two equal-sized groups. 
        It is natural to conjecture that this perturbation is amplified more strongly if these groups are
                communities of a
                graph.\footnote{In idealized form, the ``communities'' of a graph
           are densely connected sets of nodes that are connected sparsely to other dense sets of nodes~\cite{porter09}.}  
    In this situation, we say that the perturbation is \emph{aligned} with a graph's community structure.
        By contrast, a perturbation that splits the set of nodes into groups that are unrelated to a graph's community structure is \emph{unaligned} with that community structure. 
            Intuitively, perturbations that are aligned with 
        graph community structure are amplified more than unaligned perturbations. 
    The following result makes this intuition precise for a special subset of BE graphs.

    \begin{thm} \label{thm:BE-laplacian}
        Consider a graph with 
        two zealots. Suppose that the persuadable subgraph $\cG_\cP$ is $d$-regular (i.e., all nodes have degree $d$) for some $d$ and that every persuadable node is adjacent to both zealots. 
        We then have that the space of unstable directions at the harmonic state $\bar{\vx}$ is spanned by the eigenvectors $\{\vv_i\}$ of $\mL_\cP$ whose associated eigenvalues $\lambda_i$ satisfy
        \begin{align} \label{eq:BE-laplacian-eig-condition}
            \lambda_i \leq -\frac{2v(1-2\gamma(1-v))}{u}\;.
        \end{align} 
    \end{thm}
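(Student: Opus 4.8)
The plan is to reuse the closed form for $\mM_\cP$ obtained in the proof of \Cref{thm:BE-harmonic} and then exploit the two extra hypotheses to turn the stability question into a spectral statement about $\mL_\cP$. Recall from equation \eqref{eq:M} that, at the harmonic state $\bar{\vx} = \vzero$ of a graph satisfying BE, $\mM_\cP = -u\mL_\cP - v(1 - 2\gamma(1-v))\mK$, where $u = \omega(0)$, $v = \omega(1)$, $\mL_\cP$ is the combinatorial Laplacian of the persuadable subgraph, and $\mK$ is the diagonal matrix of zealot-degrees. The hypothesis that every persuadable node is adjacent to both zealots forces $\mK = 2\mI$, so $\mM_\cP = -u\mL_\cP - 2v(1 - 2\gamma(1-v))\mI$.

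Next I would pass from $\mM_\cP$ to the Jacobian block $\mJ_\cP = \mS_\cP^{-1}\mM_\cP$. In general these matrices share only inertia, not eigenvectors, but the present hypotheses make $\mS_\cP$ a positive multiple of the identity: since $\cG_\cP$ is $d$-regular and each persuadable node has exactly two zealot neighbors, each persuadable node has strength $s_i = du + 2v$ at $\bar{\vx}$, so $\mS_\cP = (du + 2v)\mI$. Hence $\mJ_\cP = \tfrac{1}{du + 2v}\bigl(-u\mL_\cP - 2v(1 - 2\gamma(1-v))\mI\bigr)$ is an affine function of the symmetric matrix $\mL_\cP$, and is therefore diagonalized by any orthonormal eigenbasis $\{\vv_i\}$ of $\mL_\cP$, with $\mJ_\cP\vv_i = \mu_i\vv_i$ for $\mu_i = \tfrac{-u\lambda_i - 2v(1 - 2\gamma(1-v))}{du + 2v}$.

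Finally I would read off the unstable subspace. Because $du + 2v > 0$ and $u > 0$, we have $\mu_i \ge 0$ iff $-u\lambda_i \ge 2v(1 - 2\gamma(1-v))$, i.e.\ iff $\lambda_i \le -\tfrac{2v(1 - 2\gamma(1-v))}{u}$, which is exactly condition \eqref{eq:BE-laplacian-eig-condition} (strict inequality giving a strictly positive eigenvalue, equality a neutral direction). Since the dynamics \eqref{eq:dynamics} leaves the zealot opinions fixed, the block structure of $\mJ$ recorded in \Cref{sec:basicresults} shows that the unstable behavior of $\mJ$ at $\bar{\vx}$ is governed entirely by $\mJ_\cP$; thus the space of unstable directions is $\operatorname{span}\{\vv_i : \lambda_i \le -2v(1 - 2\gamma(1-v))/u\}$, as claimed. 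The one step that genuinely needs care — and the main obstacle — is the assertion that $\mJ_\cP$ and $\mL_\cP$ have a common eigenbasis: this uses \emph{both} hypotheses to make $\mS_\cP$ scalar, and it fails for a general BE graph, where inertia still gives the \emph{number} of unstable directions but one can no longer identify them with individual Laplacian eigenvectors.
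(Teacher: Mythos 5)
Your proposal is correct and follows essentially the same route as the paper: substitute $\mK = 2\mI$ and $\mS_\cP = (du+2v)\mI$ into $\mM_\cP = -u\mL_\cP - v(1-2\gamma(1-v))\mK$, observe that $\mJ_\cP$ is then an affine function of $\mL_\cP$ with a shared eigenbasis, and translate nonnegativity of the Jacobian eigenvalues into the stated threshold on the Laplacian eigenvalues. Your explicit remark that the scalar form of $\mS_\cP$ is what licenses identifying unstable \emph{directions} (not just their count) with Laplacian eigenvectors is exactly the point the paper's proof relies on implicitly, and your sign for $\mJ_\cP = \mS_\cP^{-1}\mM_\cP$ is the consistent one.
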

    
    \begin{proof}
            The hypothesis that $G_{\cP}$         is $d$-regular implies that $\mD = d\,\mI$, and the hypothesis that every persuadable node is adjacent to both zealots implies that $\mK = 2\mI$. 
               The Jacobian matrix 
                of the system \eqref{eq:dynamics} restricted to the persuadable subgraph is thus
        \begin{align*}
            \mJ_\cP = \mS_\cP^{-1} \mM_\cP = -\frac{1}{ud+2v}\mM_\cP\;.
        \end{align*}
        The space of unstable directions of $\mJ_\cP$ coincides with the space that is spanned by the eigenvectors of $\mM_\cP$ with nonnegative eigenvalues.  
        From equation \eqref{eq:M}, we observe that if $\vv$ is an eigenvector of $\mM_\cP$ with eigenvalue $\nu$, then $\vv$ must also be an eigenvector of $\mL$ with eigenvalue $\lambda = -\frac{\nu + 2v(1-2\gamma(1-v))}{u}$. 
        Requiring $\nu \geq 0$ completes the proof. 
    \end{proof}

    \begin{remark}
        When the persuadable subgraph $\cG_\cP$ is connected, there is a unique smallest eigenvalue $\lambda_1 = 0$ with corresponding eigenvector $\vOnes$. 
        This eigenvector corresponds to a uniform shift of all agents in the same direction in opinion space. 
        The emergence of this unstable direction is a ``consensus bifurcation'' (in the 
        terminology of \citet{franciBreakingIndecisionMultiagent2022}).
        Subsequent bifurcations as $\gamma$ increases can induce dissensus or polarization\map{.}
               \end{remark}

    In the limit $\gamma \rightarrow \infty$, the structure of the space of unstable directions depends on $\delta$.

    \begin{cor}
       Consider a graph $\cG$ that satisfies the hypotheses of \Cref{thm:BE-laplacian}. 
       In the limit $\gamma \rightarrow \infty$, the following statements hold\map{:}
        \begin{enumerate}
            \item If $\delta > 1$, the harmonic steady state $\bar{\vx}$ is linearly stable. 
            \item If $\delta = 1$, the space of unstable directions at $\bar{\vx}$ is the range of $\mL_\cP$. 
            \item If $\delta \in [0,1)$, the space of unstable directions at $\bar{\vx}$ is spanned by the vector $\vOnes$. 
        \end{enumerate}
    \end{cor}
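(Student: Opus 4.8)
The plan is to take the limit $\gamma \to \infty$ directly in the eigenvalue condition \eqref{eq:BE-laplacian-eig-condition} from \Cref{thm:BE-laplacian}, being careful about how the quantities $u = \omega(0)$ and $v = \omega(1)$ (which implicitly depend on $\delta$ through the influence function \eqref{eq:w_sigmoid}) behave in this limit. First I would record, from the definition \eqref{eq:w_sigmoid}, that $u = \omega(0) = 1/(1 + e^{-\gamma\delta}) \to 1$ for every $\delta > 0$, and that $v = \omega(1) = 1/(1 + e^{\gamma(1-\delta)})$, so that $v \to 0$ if $\delta < 1$, $v \to 1/2$ if $\delta = 1$, and $v \to 1$ if $\delta > 1$. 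The crucial compound quantity is $c(\gamma) \triangleq 2v(1 - 2\gamma(1-v))/u$, which is the right-hand side of \eqref{eq:BE-laplacian-eig-condition} up to sign, and the entire corollary reduces to determining $\lim_{\gamma\to\infty} c(\gamma)$ in each of the three $\delta$-regimes.

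The key computation is the asymptotics of $\gamma(1-v)$. Since $1 - v = e^{\gamma(1-\delta)}/(1 + e^{\gamma(1-\delta)})$, we get $\gamma(1-v) \to 0$ when $\delta > 1$ (exponential decay beats the linear factor), $\gamma(1-v) = \gamma/2 \to \infty$ when $\delta = 1$, and $\gamma(1-v) \to \infty$ when $\delta < 1$ (indeed $1-v \to 1$). Feeding this into $c(\gamma)$: when $\delta > 1$ we have $v \to 1$, $u \to 1$, and $2\gamma(1-v) \to 0$, so $c(\gamma) \to 2$; the condition \eqref{eq:BE-laplacian-eig-condition} becomes $\lambda_i \le -2 < 0$, which no eigenvalue of the positive-semidefinite Laplacian $\mL_\cP$ satisfies, hence no unstable directions and $\bar\vx$ is linearly stable. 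When $\delta = 1$ we have $v \to 1/2$, $u \to 1$, and $2\gamma(1-v) = \gamma \to \infty$, so $2v(1-2\gamma(1-v)) = 1 - \gamma \to -\infty$ and thus $c(\gamma) \to -\infty$; the condition becomes $\lambda_i \le +\infty$, satisfied by every eigenvalue, so the space of unstable directions is all of $\R^{|\cP|}$ — but one should phrase it as ``the range of $\mL_\cP$'' as the statement requests, which I would reconcile by noting that the range of $\mL_\cP$ together with its kernel (spanned by $\vOnes$) is all of $\R^{|\cP|}$ when $\cG_\cP$ is connected, and more carefully that the $\vOnes$ direction is also unstable here since $\lambda_1 = 0 \le c(\gamma)$ for large $\gamma$, so in fact every eigenvector is unstable; I will follow the paper's phrasing and flag that the statement presumes connectivity. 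When $\delta < 1$ we have $v \to 0$, $u \to 1$, but the product $v \cdot 2\gamma(1-v)$ is the delicate term: $v \sim e^{-\gamma(1-\delta)}$ while $\gamma(1-v) \sim \gamma$, so $v \cdot 2\gamma(1-v) \sim 2\gamma e^{-\gamma(1-\delta)} \to 0$, and therefore $2v(1 - 2\gamma(1-v)) = 2v - 2v\cdot 2\gamma(1-v) \to 0$, giving $c(\gamma) \to 0$; the condition \eqref{eq:BE-laplacian-eig-condition} becomes $\lambda_i \le 0$, which (for connected $\cG_\cP$) is satisfied only by $\lambda_1 = 0$ with eigenvector $\vOnes$, so the unstable space is $\mathrm{span}\{\vOnes\}$.

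The main obstacle I anticipate is the boundary behavior and the phrasing in the $\delta = 1$ and $\delta < 1$ cases: one must be precise about whether the inequality in \eqref{eq:BE-laplacian-eig-condition} is strict or not (it comes from requiring $\nu \ge 0$, i.e., a \emph{nonnegative} eigenvalue of $\mM_\cP$, so the zero eigenvalue counts as ``unstable'' in the weak sense used throughout), and about the fact that $\mL_\cP$ always has $\lambda_1 = 0$, so the $\vOnes$ direction is borderline and its classification is controlled by the sign of $c(\gamma)$ for large finite $\gamma$ rather than merely the limit. In the $\delta < 1$ case I should check that $c(\gamma) > 0$ (not just $\to 0^+$) for all sufficiently large $\gamma$ so that $\lambda_1 = 0$ genuinely fails the condition $\lambda_i \le -c(\gamma) < 0$ — wait, this needs care: if $c(\gamma) \to 0$ from above then $-c(\gamma) \to 0$ from below and $\lambda_1 = 0$ does \emph{not} satisfy $\lambda_i \le -c(\gamma)$, so $\vOnes$ would be \emph{stable}, contradicting statement (3). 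I would resolve this by computing the sign of $c(\gamma)$ more carefully: $c(\gamma) = (2v/u)(1 - 2\gamma(1-v))$, and for $\delta < 1$, $1 - 2\gamma(1-v) = 1 - 2\gamma + 2\gamma v$ is negative for all $\gamma > 1/2$ roughly, so $c(\gamma) < 0$ for large $\gamma$, hence $-c(\gamma) > 0$ and $\lambda_1 = 0 \le -c(\gamma)$ holds — so $\vOnes$ is unstable, as claimed, and higher eigenvalues $\lambda_i > 0$ fail the bound once $-c(\gamma) \to 0^-$ slowly enough; I need $-c(\gamma) < \lambda_2$ eventually, which holds since $-c(\gamma) \to 0$ while $\lambda_2 > 0$ is a fixed positive constant. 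This sign bookkeeping is the only genuinely fiddly part; everything else is a direct substitution into \Cref{thm:BE-laplacian}.
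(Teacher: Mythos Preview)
Your approach is essentially the same as the paper's: compute the limit of the right-hand side of \eqref{eq:BE-laplacian-eig-condition} in each $\delta$-regime and read off which Laplacian eigenvalues satisfy the bound. The paper's proof is terser---for $\delta < 1$ it simply asserts that the right-hand side ``becomes arbitrarily small'' and that only $\lambda_1 = 0$ survives, without the sign analysis you work through. Your more careful bookkeeping (establishing that $c(\gamma) < 0$ so the threshold $-c(\gamma)$ tends to $0$ from \emph{above}, hence $\lambda_1 = 0$ satisfies the non-strict inequality while every $\lambda_i > 0$ eventually fails it) is a genuine improvement in rigor; note the small slip where you write ``$-c(\gamma) \to 0^-$'' but clearly mean $0^+$.

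Your observation about case~2 is also correct and worth retaining: the paper's own proof shows that \emph{every} eigenvector of $\mL_\cP$ (including $\vOnes$) becomes unstable when $\delta = 1$, so the unstable space is all of $\R^{|\cP|}$, not merely the range of $\mL_\cP$. The corollary's phrasing understates what is actually proved. Finally, a minor boundary point you skirted: at $\delta = 0$ one has $u = \omega(0) = 1/2$ identically (not $u \to 1$), but this does not change the limiting behavior of the threshold, so statement~(3) still holds there.
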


      \begin{proof}
        For any $\delta \geq 0$, we have that $u = \omega(0) \rightarrow 1$ as $\gamma \rightarrow \infty$. 
        
        Suppose first that $\delta > 1$. 
        In this case, $v = \omega(1) \rightarrow 1$ exponentially fast as $\gamma \rightarrow \infty$. 
        Consequently, the right-hand side of the bound \eqref{eq:BE-laplacian-eig-condition} approaches the value $-2$. 
        Because $\mL_\cP$ is positive semidefinite, no eigenvalues satisfy \eqref{eq:BE-laplacian-eig-condition} in this limit. \Cref{thm:BE-laplacian} implies that $\bar{\vx}$ is linearly stable. 
        
        Now suppose that $\delta = 1$. 
        In this case, $v = {1}/{2}$ and the bound \eqref{eq:BE-laplacian-eig-condition} simplifies to 
        \begin{align}\label{eq:small-eigenvalue}
            \lambda_i \leq \frac{\gamma - 1}{u}\;. 
        \end{align}
        As $\gamma \rightarrow \infty$, every eigenvalue of $\mL$ satisfies 
        the bound \eqref{eq:small-eigenvalue}.
        Therefore, all eigenvectors of $\mL$ are present in the space of unstable directions. 
    
        Now suppose that $\delta \in [0,1)$.
        In this case, $v \rightarrow 0$ exponentially fast.
        Consequently, the right-hand side of the bound \eqref{eq:small-eigenvalue} becomes arbitrarily small. 
        The only eigenvalue of $\mL_\cP$ that satisfies \eqref{eq:small-eigenvalue} is $\lambda_1 = 0$. 
        Therefore, the only unstable direction is spanned by $\vOnes$. 
    \end{proof}

    \begin{figure}
    \centering
    \includegraphics[width=\textwidth]{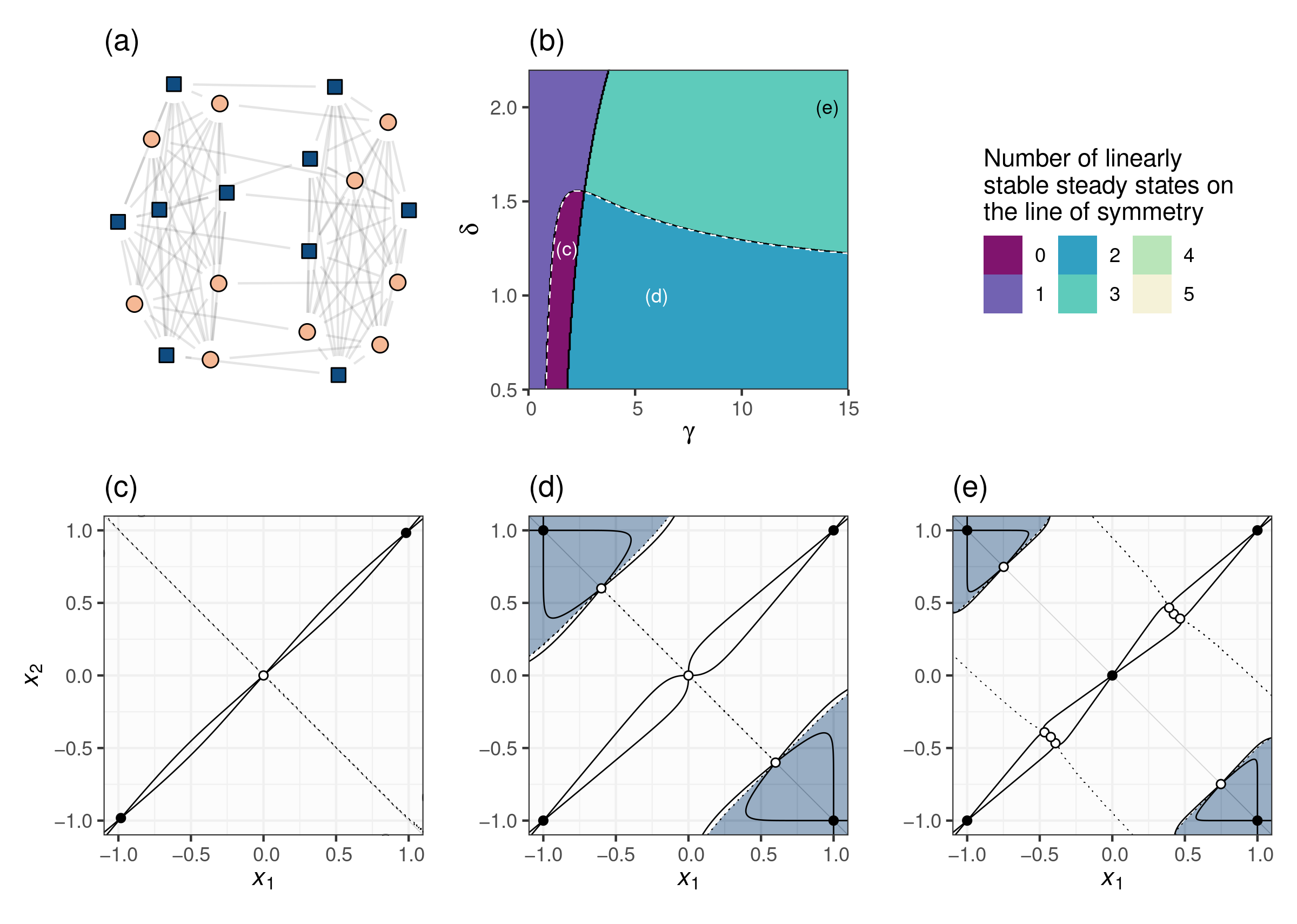}
    \caption{
        (a) A visualization of the persuadable subgraph of a graph; shape and color distinguish the classes $\cP_1$ and $\cP_2$ of persuadable nodes. The classes $\cP_1$ and $\cP_2$ are not aligned with the graph's community structure.
        There is one zealot with opinion $-1$ and one zealot with opinion $1$;
        each zealot is adjacent
        to every persuadable node (not shown). 
        (b) The number of linearly stable steady states on
                the line $x_1 = -x_2$ as a function of the parameters $\delta$ and $\gamma$. 
        The dashed white curve corresponds to the condition in \Cref{thm:BE-harmonic} for the linear stability of the {harmonic state} $\bar{\vx}$. 
        We mark the regions of parameter space that correspond to the 
        phase portraits in panels (c)--(e).
        (c)--(e) Phase portraits in the variables $x_1$ and $x_2$ for three different combinations of $\delta$ and $\gamma$. 
        The solid gray diagonal line is $x_1 = -x_2$ (i.e., the line of symmetry); polarized states on this line are symmetrically polarized, whereas polarized states that are not on this line are asymmetrically polarized. 
        The solid black disks are linearly stable steady states, and the hollow disks are linearly unstable steady states. 
        The solid black curves are nullclines. 
        We shade the regions of attraction based on the value of $\abs{x_1 - x_2}$ at the associated attractor; we use darker shades in regions with more polarized behavior as $t \rightarrow \infty$.
        } \label{fig:paired-cliques-misaligned}
    \end{figure}

    \begin{figure}
        \centering
        \includegraphics[width=\textwidth]{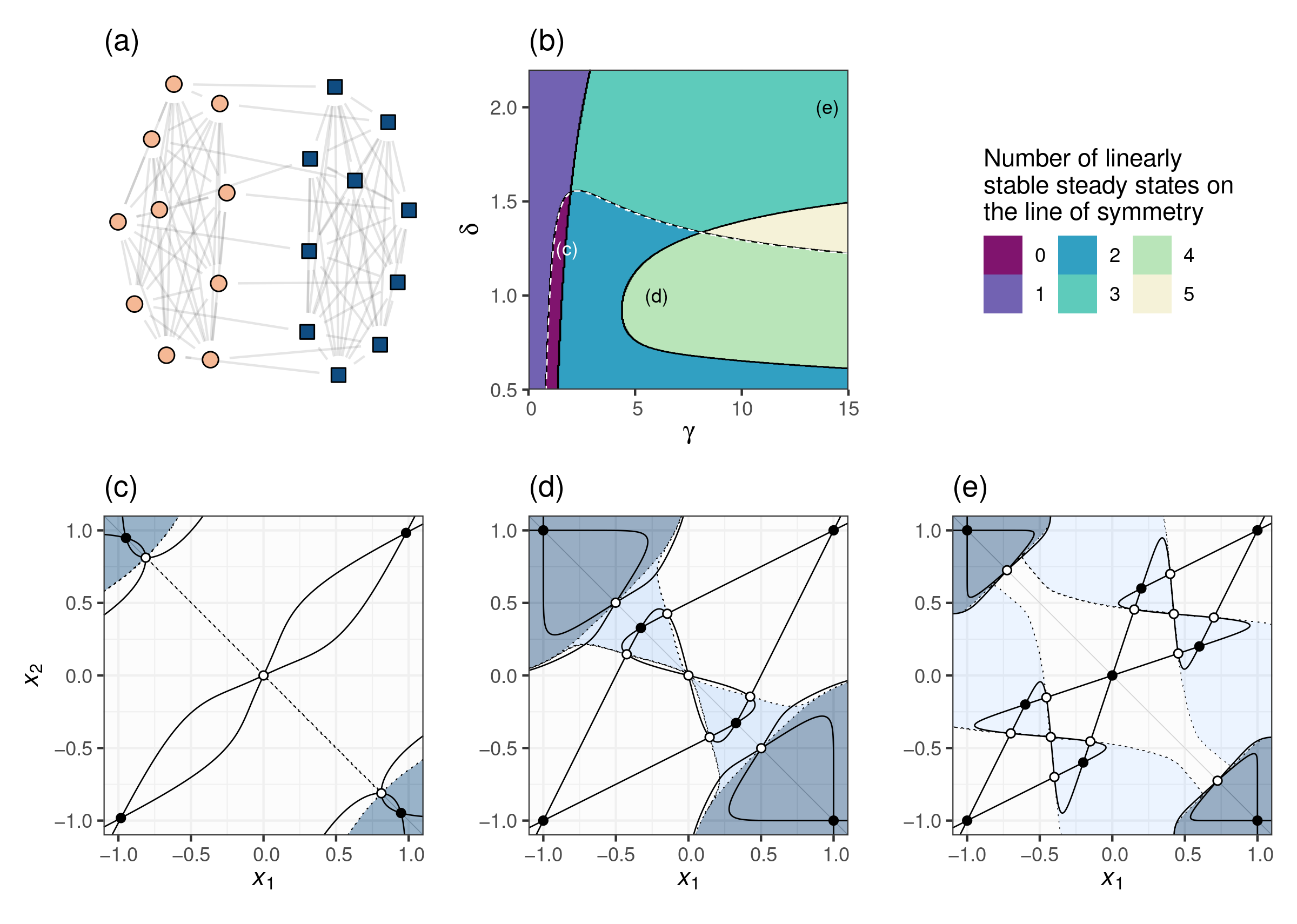}
        \caption{(a) A visualization of the persuadable subgraph, (b) the number of linearly stable steady states 
                on the line $x_1 = -x_2$ (i.e., the line of symmetry) as a function of the parameters $\delta$ and $\gamma$, and (c)--(e) phase portraits in the variables $x_1$ and $x_2$ for three different combinations of $\delta$ and $\gamma$ for the classes $\cP_1$ and $\cP_2$ that align with the community structure of the persuadable subgraph. 
        The parameter values that we use panels (c)--(e) are the same as those that we used in \Cref{fig:paired-cliques-misaligned}(c)--(e).    
                                        } \label{fig:paired-cliques-aligned}
    \end{figure}

    \Cref{thm:BE-laplacian} states that, as $\gamma \rightarrow \infty$, the directions in which the harmonic state destabilizes first are the directions with the large projections onto the eigenspace of $\mL_\cP$ that is associated with its smallest eigenvalues.\footnote{Recall that $\mL_\cP$ is the combinatorial graph Laplacian of the persuadable subgraph. Therefore, its eigenvalues are real and nonnegative.}
    The direction that emerges first is $\vv_1 = \vOnes$, which corresponds to all persuadable nodes shifting their opinions towards that of
    a single zealot. The next direction to emerge is the Fiedler eigenvector $\vv_2$. It is common to use the signs of the entries of the Fiedler eigenvector for
        spectral community detection in networks when seeking
        two communities \citep{von2007tutorial}. One can use additional eigenvectors to identify larger numbers of (finer-grained) communities.

    In \Cref{fig:paired-cliques-misaligned,fig:paired-cliques-aligned}, we explore the interplay between graph structure and linear stability in a BE graph with two communities. 
    Each community is a 10-node clique, and each
    node in a clique is adjacent
        to exactly one node in the other clique. 
    Additionally, all of these nodes (which are persuadable)
    are adjacent to both of two opposing zealots. 
    The zealots are not adjacent to each other. 
    In this graph, the signs of the Fiedler eigenvector $\vv_2$ distinguish the two cliques. 
    Therefore, we can measure the alignment of any perturbation 
        with the community structure using the projection of that perturbation onto $\vv_2$.

    We consider states in which we can partition the persuadable subgraph into two equal-sized classes $\cP_1$ and $\cP_2$ such that all nodes in $\cP_1$ have the same opinion $x_1$ and all nodes in $\cP_2$ have the same opinion $x_2$. 
    This partition reduces our system to the two variables $x_1$ and $x_2$, allowing us to visualize it in two-dimensional phase portraits.
        These states emerge as $\gamma$ increases via ``polarized dissensus bifurcations'' (in the terminology of \citet{franciBreakingIndecisionMultiagent2022}). 
            In \Cref{fig:paired-cliques-misaligned}, we consider a configuration in which each node in both $\cP_1$ and $\cP_2$ has exactly five neighbors in each of the two classes.
        When $x_1 = -x_2$, this configuration is aligned with one of the eigenvectors that corresponds to
    the third-smallest eigenvalue $\lambda_3$ of $\mL_\cP$.
    Every such eigenvector is orthogonal to $\vv_2$, so this configuration is
    unaligned with the graph's community structure. 
    In \Cref{fig:paired-cliques-aligned}, we consider a configuration in which each node in $\cP_1$ has nine neighbors in $\cP_1$ and one neighbor in $\cP_2$.
        When $x_1 = -x_2$, this configuration is aligned with
        the Fiedler vector of the graph.
        In this sense, it is aligned with the graph's community structure.

    To give some qualitative guidance about
        the dependence of the system on the parameters $\delta$ and $\gamma$, we count the number of linearly stable steady states that lie on the line $x_1 = -x_2$ in panel (b) of \Cref{fig:paired-cliques-misaligned} and \Cref{fig:paired-cliques-aligned}. 
    Steady states on this line correspond to symmetric polarization, in which the opinions of the nodes of each class
    are equidistant from the origin. 
    This analysis does not capture asymmetrically polarized states, in which the nodes of one class possesses a more extreme opinion than those of the other; below we will see examples of such states.

   In the unaligned configuration in \Cref{fig:paired-cliques-misaligned}, there are four possible numbers of steady states.
           For very small values of $\gamma$, only the harmonic state is linearly stable. 
    For $\delta < 1-y$, where $y$ satisfies \Cref{eq:y-def}, increasing $\gamma$ causes the {harmonic state} to linearly destabilize. Consequently, there are
        no linearly stable steady states on the line $x_1 = -x_2$ in panel (c).
        Increasing $\gamma$ further generates linearly stable polarized steady states on the line $x_1 = -x_2$. 
    Depending on the value of $\delta$, it is possible for a linearly stable harmonic state to accompany these steady states; see panels (d) and (e).

    When a partition into opinion classes aligns with a graph's community structure, we observe richer behavior 
    (see \Cref{fig:paired-cliques-aligned}) than in the above unaligned situation. 
    Depending on the values of $\delta$ and $\gamma$, there are five different possible numbers of linearly stable steady states on the line $x_1 = -x_2$.
    Our choice to restrict attention to this one-dimensional space reflects computational limitations that prevent us from enumerating all stable states in the two-dimensional phase 
        space for many combinations of $\delta$ and $\gamma$.  
    In panel (c), we highlight that the alignment with graph structure encourages polarization. On the line $x_1 = -x_2$, we observe symmetric, highly polarized steady states with $\abs{x_1 - x_2} \approx 2$.
    By contrast, for the unaligned example in \Cref{fig:paired-cliques-misaligned}, there are
        no linearly stable steady states for this parameter combination on the line $x_1 = -x_2$. 
        In \Cref{fig:paired-cliques-aligned}(d),
        we show a parameter combination with four linearly stable steady states on the line $x_1 = x_2$; 
        these include two highly polarized states with $\abs{x_1 - x_2} \approx 2$ and two moderately polarized states with $\abs{x_1 - x_2} \approx 0.6$. 
    In panel (e), we see that the the moderately polarized states have moved off of the line $x_1 = -x_2$; this asymmetric polarization is reminiscent of the ``moderate--extremist disagreement'' of \citet{franci2019model}. 
    Additionally, the {harmonic state} is again linearly stable.
    By comparing panels (c)--(e) in \Cref{fig:paired-cliques-misaligned,fig:paired-cliques-aligned}, we see
    that the combined volume of the attraction basins of
    consensus states on the line $x_1 = x_2$ is smaller in \Cref{fig:paired-cliques-aligned}, indicating a greater propensity towards enduring disagreement from uniformly random initial opinions.

\section{Conclusions and discussion} 
\label{sec:conclusions}
We studied a sigmoidal bounded-confidence model (SBCM), which interpolates smoothly
between averaging dynamics and 
\linebreak
bounded-confidence dynamics, and used it to examine opinion dynamics on networks.
  We showed that its long-term dynamics
  are related to the long-term behaviors of 
the averaging and bounded-confidence dynamics in the associated limits. 
We also performed linear stability analysis of our SBCM's steady states for certain graph topologies.  
We thereby obtained qualitative descriptions of how bounded-confidence
behavior emerges from averaging behavior 
as the sigmoidal opinion-updating function's
steepness parameter $\gamma \rightarrow \infty$. 
This yielded both analytical and computational insights into the
relationship between graph topology and the stability of polarized opinion states. By considering special graph topologies --- first path graphs and then balanced-exposure graphs with community structure --- we were able to probe deeper into specific situations of interest.

Our work invites many further developments. 
For example, there remain 
fundamental model properties to analyze. 
One important question is when it is possible to approximate a steady state of an HK model by sequences of steady states of our SBCM as $\gamma \rightarrow \infty$. 
This question complements our result in \Cref{thm:HK-approx}. 
We offer the following conjecture. 
\begin{conj}
    Let $\vx$ be a steady state of an HK model with confidence bound $\sqrt{\delta}$, and let $C_\delta(\vx) \subset \R^n$ be the set of opinion vectors $\vy$ such that
    \begin{align*}
        (y_i - y_j)^2 \leq \delta \iff (x_i - x_j)^2 \leq \delta \quad \text{for all} \quad i \sim j\;. 
    \end{align*}
    There exist $\vx' \in C_{\delta}(\vx)$, a sequence $\curlybrace*{\gamma^{(\ell)}}_\ell$, and a sequence $\curlybrace*{\vx^{(\ell)}}_\ell$ such that 
    \linebreak
    $\mF_{\gamma^{(\ell)}}(\vx^{(\ell)}) = \vzero$ and $\vx^{(\ell)}\rightarrow \vx'$ as $\gamma \rightarrow \infty$. 
\end{conj}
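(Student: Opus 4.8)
The plan is a continuation argument in the small parameter $1/\gamma$. Given an HK steady state $\vx$ with confidence bound $\sqrt\delta$, let $E(\vx) = \{(i,j) : i\sim j,\ (x_i - x_j)^2 < \delta\}$ be the induced interaction topology. Being an HK steady state means exactly that each persuadable $x_i$ is the unweighted average of its $E(\vx)$-neighbors; equivalently, $\vx$ is a steady state of the \emph{linear} averaging system on the subgraph with edge set $E(\vx)$ and zealot set $\cZ$, which I denote $\mF_E$. Wherever $\vx$ is bounded away from every manifold $(x_i - x_j)^2 = \delta$, the operator $\mF_\infty$ agrees with $\mF_E$ on a neighborhood, so on that region $\vx$ is a nondegenerate zero of $\mF_\infty$ precisely when the Jacobian of $\mF_E$ there is invertible, which (by the negative-definiteness argument in the proof of \Cref{thm:small-gamma}) happens iff every persuadable connected component of $E(\vx)$ contains a node adjacent to a zealot. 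Call such an HK steady state \emph{regular}; for a regular $\vx$ with no pair on the boundary I would simply take $\vx' = \vx$ and proceed to the continuation step below.

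To handle a general $\vx$, I would first choose a limit point $\vx' \in C_\delta(\vx)$. If some pair has $(x_i - x_j)^2 = \delta$ exactly, then $C_\delta(\vx)$ allows us to relax it \emph{only} toward the interior (a $\le\delta$ pair cannot be pushed strictly outside), so the natural move is to declare each such pair an edge, enlarge $E(\vx)$ to a topology $E'$, and look for a steady state $\vx'$ of $\mF_{E'}$ that lies inside $C_\delta(\vx)$ and has no pair on a manifold $(x_i - x_j)^2 = \delta$. If a zealot-free component is present (where $\mF_{E'}$ has the one-parameter family of uniform shifts, by \Cref{lm:disconnected} and \Cref{thm:core}), we also fix which shift to take. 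The existence of an admissible $\vx'$ is where the conjecture is genuinely delicate: on a zealot-linked component the harmonic extension for $E'$ is rigid and need not satisfy the strict inequalities defining $C_\delta(\vx)$, so in general one may be forced to decide pair-by-pair whether each boundary pair becomes an edge or stays a non-edge, and to argue that some consistent choice yields a $\vx'$ that both solves the resulting linear averaging system and respects $C_\delta(\vx)$. I expect this combinatorial/geometric feasibility statement to be the crux.

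Given an admissible $\vx'$ that is a nondegenerate zero of $\mF_\infty$ (equivalently, the chosen $E'$ has all persuadable components zealot-linked), the continuation itself is short. Choose $a > 0$ small enough that $\vx'$ and a ball $B$ around it lie in the set $\cS(a)$ of \Cref{lm:continuity} and that $\vx'$ is the only zero of $\mF_\infty$ in $\overline{B}$. Since $\vx'$ is nondegenerate, the topological degree $\deg(\mF_\infty, B, \vzero)$ is $\pm1$; by the uniform convergence $\mF_\gamma \to \mF_\infty$ on $\cS(a)$ (\Cref{lm:continuity}) and homotopy invariance, $\deg(\mF_\gamma, B, \vzero) \ne 0$ for all sufficiently large $\gamma$, so $\mF_\gamma$ has a zero in $B$. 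Shrinking $B$ and taking a diagonal sequence produces $\gamma^{(\ell)} \to \infty$ and zeros $\vx^{(\ell)}$ of $\mF_{\gamma^{(\ell)}}$ with $\vx^{(\ell)} \to \vx'$, which is the claim. Alternatively, upgrading \Cref{lm:continuity} to $C^1$ convergence on sets bounded away from the manifolds $(x_i - x_j)^2 = \delta$ --- each offending Jacobian term is $O(\gamma e^{-c\gamma})$ there --- lets one run the implicit function theorem instead, which additionally gives local uniqueness of $\vx^{(\ell)}$. When $E'$ has a zealot-free component, $D\mF_\infty(\vx')$ has a kernel (the shift direction on that component), the degree/IFT argument fails directly, and I would instead do a Lyapunov--Schmidt reduction: \Cref{lm:disconnected} decouples the component, and one studies the scalar bifurcation equation for its shift, showing that the exponentially small cross-weights to the rest of the graph pin the shift at a definite value for large $\gamma$.

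The main obstacle is thus the choice of $\vx'$ in the second paragraph --- a combinatorial feasibility question about whether the boundary pairs of an HK steady state can be consistently resolved into edges and non-edges so that the corresponding linear averaging system has a steady state inside $C_\delta(\vx)$ --- together with the Lyapunov--Schmidt bookkeeping for zealot-free components. Neither difficulty appears insurmountable, and the continuation machinery above already settles the conjecture unconditionally for regular HK steady states with no boundary pairs; a full proof would need to dispatch the degenerate cases as well.
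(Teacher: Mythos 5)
This statement is stated in the paper as an open conjecture; the authors give no proof of it, so there is nothing to compare your argument against --- what you have written is a proposed attack on an open problem, and I will assess it on its own terms.

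The continuation step in your third paragraph is sound and does establish a genuine special case of the conjecture: if the HK steady state $\vx$ has no pair with $(x_i-x_j)^2=\delta$ and every persuadable component of the induced interaction graph $E(\vx)$ is anchored to a zealot, then $\vx$ is a nondegenerate zero of $\mF_\infty$, and the uniform convergence of \Cref{lm:continuity} together with homotopy invariance of the Brouwer degree (or a quantitative implicit-function argument, since the Jacobian discrepancy is indeed $O(\gamma e^{-c\gamma})$ away from the manifolds) yields zeros of $\mF_\gamma$ converging to $\vx'=\vx$. This is a nice complement to \Cref{thm:HK-approx}, which goes in the opposite direction. However, you should be clearer that this "regular" case is quite restrictive: the typical steady state of an HK model is a fragmentation into internally consensual clusters separated by more than the confidence bound, and any cluster not containing or touching a zealot gives a zealot-free component of $E(\vx)$, hence a kernel direction (the uniform shift of that cluster) in $D\mF_\infty$. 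So the degenerate case you defer is not an edge case --- it is the generic one.

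The two difficulties you flag are exactly where the conjecture lives, and your proposal does not close either. For the Lyapunov--Schmidt reduction on a free cluster, the bifurcation equation asks that the net drift on the cluster's consensus value, induced by the exponentially small weights $w_{ij}$ to out-of-range neighbors, vanish at some shift; these weights decay at rates $e^{-\gamma((x_i-x_j)^2-\delta)}$ that differ across the out-of-range edges, so as $\gamma\to\infty$ the drift is dominated by whichever neighbor is closest to the cluster, and the cluster will generically be dragged toward it --- one must show the shift is pinned at a finite value that still lies in $C_\delta(\vx)$ and does not push any pair onto or across a manifold, and nothing in your sketch rules out that the only equilibria of the reduced equation sit outside $C_\delta(\vx)$ (or escape to the boundary of $\cS(a)$, where \Cref{lm:continuity} no longer applies). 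The combinatorial resolution of boundary pairs has the same character: you correctly observe that it is a feasibility question, but you give no mechanism for proving feasibility. In short: you have a correct proof of a special case and an accurate map of the obstacles, but the conjecture remains open under your proposal, as you yourself acknowledge.
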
 
The set $C_\delta(\vx)$ includes all opinion vectors in which the same pairs of nodes as those in the vector $\vx$ are able to influence each other (in an HK model with confidence bound
$\delta$). Our conjecture states that every such pattern of mutual influence has a representative opinion vector that one can approximate by a sequence of steady states of our
SBCM. Additionally, although we focused in the present paper on
the structure of steady states of our SBCM, it seems worthwhile to study the dependence of the transient behavior of our SBCM on 
graph topology (perhaps using methods that are similar to those of \citet{xing2022transient}). 
Linear stability analysis of our SBCM does not allow one to determine its transient behaviors, and other asymptotic approaches may be helpful to describe them.

There are several other interesting ways to build on our work.
It is particularly desirable to analytically investigate more general graph topologies than the ones that we studied in \Cref{sec:specialcases}. 
There are also several possible modifications of the underlying model dynamics. 
One possibility is the incorporation of noise into the opinion-update rule \eqref{eq:dynamics} and studying the resulting stochastic differential equation (SDE). 
SDE models of opinion dynamics are less common than discrete-time stochastic and continuous-time deterministic opinion models, but some tractable models do exist \citep{de2016learning,liang2018continuous}. 
A particularly attractive benefit of incorporating noise into the opinion updates of an SBCM is that it may enable the development of methods to fit the ensuing models to experimental and observational data. 
Another possibility is to allow the parameters $\gamma$ or $\delta$ to vary stochastically with time and to study the resulting distribution of steady states.
Other promising extensions include the incorporation of multiple opinion dimensions \cite{brooks2020model,de2022multi}, contrarian agents (see \cite{juul2019hipsters} and references therein), and more general 
influence functions $w$. 

In interpreting our results about graph topology and the stability of polarized steady states, it is important to remember that our SBCM (like all other opinion models) is very limited as an empirical description of the dynamics of real-world political polarization. One important limitation is symmetry. 
Our findings treat opposing groups as behaving identically, but this is typically unrealistic. 
In particular, recent efforts suggest that this assumption appears to be a poor description of rising polarization in United States politics both for political elites \citep{leonard2021nonlinear} and for individual voters \citep{waller2021quantifying}. It is also worthwhile to study SBCMs that incorporate asymmetries in media influence, social-network structure, and behaviors in subpopulations of nodes.

\section*{Acknowledgements}
    We are grateful to Solomon Valore-Caplan for useful conversations, to
    Daniel Cooney for suggesting the extension of our SBCM to stochastic differential equations (that we mentioned briefly in our discussion of future work), and to two anonymous referees for 
    helpful comments. HZB was funded in part by the National Science Foundation (grant number DMS-2109239) through their program on Applied Mathematics.
    MAP was funded in part by the National Science Foundation (grant number 1922952) through their program on Algorithms for Threat Detection.
    Much of PSC's work was completed during his time at 
    University of California, Los Angeles.

\appendix

\section{Software}\label{software}
    Software that is sufficient to reproduce the computational experiments in our paper is available at
    \url{https://gitlab.com/philchodrow/sigmoidal-bounded-confidence}. 
    We performed our primary computations using the Julia programming language \citep{bezanson2017julia}, and we constructed visualizations using the {\sc ggplot2} package \citep{wickhamGgplot2ElegantGraphics2016} for the {\sc R} programming language \citep{rcoreteamLanguageEnvironmentStatistical2022}.

    \bibliographystyle{abbrvnat}
    
    \bibliography{refs08}

\end{document}